\documentclass[11pt]{article}

\usepackage{amssymb,amsmath,amsfonts,amsthm}
\usepackage{latexsym}
\usepackage{graphics}
\usepackage{indentfirst}


\setlength{\textwidth}{15.5cm} \setlength{\headheight}{0.5cm} \setlength{\textheight}{21.5cm}
\setlength{\oddsidemargin}{0.25cm} \setlength{\evensidemargin}{0.25cm} \setlength{\topskip}{0.5cm}
\setlength{\footskip}{1.5cm} \setlength{\headsep}{0cm} \setlength{\topmargin}{0.5cm}

\newtheorem*{thm*}{Theorem}
\newtheorem{thm}{Theorem}
\newtheorem{lem}[thm]{Lemma}
\newtheorem{pro}[thm]{Proposition}

\newtheorem{ques}[thm]{Question}

\newcommand{\N}{\mathbb{N}}

\begin{document}

\title{Answers to Two Questions on the DP Color Function}

\author{Jeffrey A. Mudrock\footnotemark[1] and Seth Thomason\footnotemark[1]}

\footnotetext[1]{Department of Mathematics, College of Lake County, Grayslake, IL 60030.  E-mail:  {\tt {jmudrock@clcillinois.edu}}}

\maketitle

\begin{abstract}

DP-coloring is a generalization of list coloring that was introduced in 2015 by Dvo\v{r}\'{a}k and Postle.  The chromatic polynomial of a graph is a notion that has been extensively studied since the early 20th century.  The chromatic polynomial of graph $G$ is denoted $P(G,m)$, and it is equal to the number of proper $m$-colorings of $G$.  In 2019, Kaul and Mudrock introduced an analogue of the chromatic polynomial for DP-coloring; specifically, the DP color function of graph $G$ is denoted $P_{DP}(G,m)$.  Two fundamental questions posed by Kaul and Mudrock are: (1) For any graph $G$ with $n$ vertices, is it the case that $P(G,m)-P_{DP}(G,m) = O(m^{n-3})$ as $m \rightarrow \infty$? and (2) For every graph $G$, does there exist $p,N \in \N$ such that $P_{DP}(K_p \vee G, m) = P(K_p \vee G, m)$ whenever $m \geq N$?  We show that the answer to both these questions is yes.  In fact, we show the answer to (2) is yes even if we require $p=1$.

\medskip

\noindent {\bf Keywords.}  graph coloring, list coloring, DP-coloring, chromatic polynomial, list color function

\noindent \textbf{Mathematics Subject Classification.} 05C15, 05C30, 05C69

\end{abstract}

\section{Introduction}\label{intro}

In this note all graphs are nonempty, finite, simple graphs unless otherwise noted.  Generally speaking we follow West~\cite{W01} for terminology and notation.  The set of natural numbers is $\N = \{1,2,3, \ldots \}$.  For $m \in \N$, we write $[m]$ for the set $\{1, \ldots, m \}$.  Given a set $A$, $\mathcal{P}(A)$ is the power set of $A$.  If $G$ is a graph and $S, U \subseteq V(G)$, we use $G[S]$ for the subgraph of $G$ induced by $S$, and we use $E_G(S, U)$ for the subset of $E(G)$ with one endpoint in $S$ and one endpoint in $U$.  If an edge in $E(G)$ connects the vertices $u$ and $v$, the edge can be represented by $uv$ or $vu$.  If $G$ and $H$ are vertex disjoint graphs, we write $G \vee H$ for the join of $G$ and $H$.  The \emph{cone of graph $G$} is $K_1 \vee G$.

\subsection{List Coloring and DP-Coloring} \label{basic}

In the classical vertex coloring problem we wish to color the vertices of a graph $G$ with up to $m$ colors from $[m]$ so that adjacent vertices receive different colors, a so-called \emph{proper $m$-coloring}. The chromatic number of a graph $G$, denoted $\chi(G)$, is the smallest $m$ such that $G$ has a proper $m$-coloring.  List coloring, a well-known variation on classical vertex coloring, was introduced independently by Vizing~\cite{V76} and Erd\H{o}s, Rubin, and Taylor~\cite{ET79} in the 1970s.  For list coloring, we associate a \emph{list assignment} $L$ with a graph $G$ such that each vertex $v \in V(G)$ is assigned a list of colors $L(v)$ (we say $L$ is a list assignment for $G$).  Then, $G$ is \emph{$L$-colorable} if there exists a proper coloring $f$ of $G$ such that $f(v) \in L(v)$ for each $v \in V(G)$ (we refer to $f$ as a \emph{proper $L$-coloring} of $G$).  A list assignment $L$ is called a \emph{$k$-assignment} for $G$ if $|L(v)|=k$ for each $v \in V(G)$.  The \emph{list chromatic number} of a graph $G$, denoted $\chi_\ell(G)$, is the smallest $k$ such that $G$ is $L$-colorable whenever $L$ is a $k$-assignment for $G$.  We say $G$ is \emph{$k$-choosable} if $k \geq \chi_\ell(G)$.  Since $G$ must be $L$-colorable whenever $L$ is a $\chi_\ell(G)$-assignment for $G$ that assigns the same list of colors to each element in $V(G)$, it is clear that $\chi(G) \leq \chi_\ell(G)$.  This inequality may be strict since it is known that there are bipartite graphs with arbitrarily large list chromatic number (see~\cite{ET79}).

In 2015, Dvo\v{r}\'{a}k and Postle~\cite{DP15} introduced a generalization of list coloring called DP-coloring (they called it correspondence coloring) in order to prove that every planar graph without cycles of lengths 4 to 8 is 3-choosable. DP-coloring has been extensively studied over the past 5 years (see e.g.,~\cite{B16,B17, BK17, BK182, BK18, KM19, KM20, KO18, KO182, LL19, LLYY19, Mo18, M18}). Intuitively, DP-coloring is a variation on list coloring where each vertex in the graph still gets a list of colors, but identification of which colors are different can change from edge to edge.  Following~\cite{BK17}, we now give the formal definition.  Suppose $G$ is a graph.  A \emph{cover} of $G$ is a pair $\mathcal{H} = (L,H)$ consisting of a graph $H$ and a function $L: V(G) \rightarrow \mathcal{P}(V(H))$ satisfying the following four requirements:

\vspace{5mm}

\noindent(1) the set $\{L(u) : u \in V(G) \}$ is a partition of $V(H)$; \\
(2) for every $u \in V(G)$, the graph $H[L(u)]$ is complete; \\
(3) if $E_H(L(u),L(v))$ is nonempty, then $u=v$ or $uv \in E(G)$; \\
(4) if $uv \in E(G)$, then $E_H(L(u),L(v))$ is a matching (the matching may be empty).

\vspace{5mm}

Suppose $\mathcal{H} = (L,H)$ is a cover of $G$.  We refer to the edges of $H$ connecting distinct parts of the partition $\{L(v) : v \in V(G) \}$ as \emph{cross-edges}. An \emph{$\mathcal{H}$-coloring} of $G$ is an independent set in $H$ of size $|V(G)|$.  It is immediately clear that an independent set $I \subseteq V(H)$ is an $\mathcal{H}$-coloring of $G$ if and only if $|I \cap L(u)|=1$ for each $u \in V(G)$.  We say $\mathcal{H}$ is \emph{$m$-fold} if $|L(u)|=m$ for each $u \in V(G)$.  The \emph{DP-chromatic number} of $G$, $\chi_{DP}(G)$, is the smallest $m \in \N$ such that $G$ has an $\mathcal{H}$-coloring whenever $\mathcal{H}$ is an $m$-fold cover of $G$.

Suppose $\mathcal{H} = (L,H)$ is an $m$-fold cover of $G$.  We say that $\mathcal{H}$ has a \emph{canonical labeling} if it is possible to name the vertices of $H$ so that $L(u) = \{ (u,j) : j \in [m] \}$ and $(u,j)(v,j) \in E(H)$ for each $j \in [m]$ whenever $uv \in E(G)$.~\footnote{When $\mathcal{H}=(L,H)$ has a canonical labeling, we will always refer to the vertices of $H$ using this naming scheme.}  Clearly, when $\mathcal{H}$ has a canonical labeling, $G$ has an $\mathcal{H}$-coloring if and only if $G$ has a proper $m$-coloring.  Also, given an $m$-assignment, $L$, for a graph $G$, it is easy to construct an $m$-fold cover $\mathcal{H}'$ of $G$ such that $G$ has an $\mathcal{H}'$-coloring if and only if $G$ has a proper $L$-coloring (see~\cite{BK17}).  It follows that $\chi(G) \leq \chi_\ell(G) \leq \chi_{DP}(G)$.  The second inequality may be strict since it is easy to prove that $\chi_{DP}(C_n) = 3$ whenever $n \geq 3$, but the list chromatic number of any even cycle is 2 (see~\cite{BK17} and~\cite{ET79}).

In some instances DP-coloring behaves similar to list coloring, but there are some interesting differences.  Molloy~\cite{Mo18} has shown that Kahn's~\cite{K96} result that the list edge-chromatic number of a simple graph asymptotically equals the edge-chromatic number holds for DP-coloring as well. Thomassen~\cite{T94} famously proved that every planar graph is 5-choosable, and Dvo\v{r}\'{a}k and Postle~\cite{DP15} observed that the DP-chromatic number of every planar graph is at most 5.  Also, Molloy~\cite{M17} recently improved a theorem of Johansson by showing that every triangle-free graph $G$ with maximum degree $\Delta(G)$ satisfies $\chi_\ell(G) \leq (1 + o(1)) \Delta(G)/ \log(\Delta(G))$.  Bernshteyn~\cite{B17} subsequently showed that this bound also holds for the DP-chromatic number.  On the other hand, Bernshteyn~\cite{B16} showed that if the average degree of a graph $G$ is $d$, then $\chi_{DP}(G) = \Omega(d/ \log(d))$.  This is in striking contrast to the celebrated result of Alon~\cite{A00} that says $\chi_\ell(G) = \Omega(\log(d))$.  It was also recently shown in~\cite{BK17} that there exist planar bipartite graphs with DP-chromatic number 4 even though the list chromatic number of any planar bipartite graph is at most 3~\cite{AT92}.  A famous result of Galvin~\cite{G95} says that if $G$ is a bipartite multigraph and $L(G)$ is the line graph of $G$, then $\chi_\ell(L(G)) = \chi(L(G)) = \Delta(G)$.  However, it is also shown in~\cite{BK17} that every $d$-regular graph $G$ satisfies $\chi_{DP}(L(G)) \geq d+1$.
 
\subsection{Counting Proper Colorings, List Colorings, and DP-Colorings}

In 1912 Birkhoff introduced the notion of the chromatic polynomial in hopes of using it to make progress on the four color problem.  For $m \in \N$, the \emph{chromatic polynomial} of a graph $G$, $P(G,m)$, is the number of proper $m$-colorings of $G$.  It can be shown that $P(G,m)$ is a polynomial in $m$ of degree $|V(G)|$ (see~\cite{B12}).  For example, $P(K_n,m) = \prod_{i=0}^{n-1} (m-i)$, $P(C_n,m) = (m-1)^n + (-1)^n (m-1)$, $P(T,m) = m(m-1)^{n-1}$ whenever $T$ is a tree on $n$ vertices, and $P(K_1 \vee G, m) = m P(G, m-1)$ (see~\cite{W01}).

The notion of chromatic polynomial was extended to list coloring in the 1990s.   In particular, if $L$ is a list assignment for $G$, we use $P(G,L)$ to denote the number of proper $L$-colorings of $G$. The \emph{list color function} $P_\ell(G,m)$ is the minimum value of $P(G,L)$ where the minimum is taken over all possible $m$-assignments $L$ for $G$.  It is clear that $P_\ell(G,m) \leq P(G,m)$ for each $m \in \N$ since we must consider the $m$-assignment that assigns the same $m$ colors to all the vertices in $G$ when considering all possible $m$-assignments for $G$.  In general, the list color function can differ significantly from the chromatic polynomial for small values of $m$.  However, for large values of $m$, Wang, Qian, and Yan~\cite{WQ17} (improving upon results in~\cite{D92} and~\cite{T09}) showed the following in 2017.

\begin{thm} [\cite{WQ17}] \label{thm: WQ17}
If $G$ is a connected graph with $l$ edges, then $P_{\ell}(G,m)=P(G,m)$ whenever $m > \frac{l-1}{\ln(1+ \sqrt{2})}$.
\end{thm}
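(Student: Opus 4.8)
The plan is to first reduce the claim to a single inequality. Since the constant $m$-assignment that gives every vertex the list $[m]$ already witnesses $P_\ell(G,m)\le P(G,m)$, it suffices to prove that $P(G,L)\ge P(G,m)$ for \emph{every} $m$-assignment $L$ of $G$ once $m>\frac{l-1}{\ln(1+\sqrt2)}$. I would establish both quantities through the same inclusion--exclusion (``Whitney'') expansion over edge subsets: for $S\subseteq E(G)$ let $c(S)$ be the number of components of the spanning subgraph $(V(G),S)$ and, for a component $C$, write $a_C(S)=\lvert\bigcap_{v\in C}L(v)\rvert$. Expanding the indicator that a coloring has no monochromatic edge gives $P(G,L)=\sum_{S}(-1)^{\lvert S\rvert}\prod_{C}a_C(S)$, and specializing to constant lists (so every $a_C=m$) gives $P(G,m)=\sum_{S}(-1)^{\lvert S\rvert}m^{c(S)}$. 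Subtracting, the empty set contributes $0$, so everything is governed by $\sum_{S\ne\emptyset}(-1)^{\lvert S\rvert}\big(\prod_C a_C(S)-m^{c(S)}\big)$.

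Next I would isolate the leading contribution. The single-edge subsets $\{uv\}$ contribute $\sum_{uv\in E(G)}\big(m-\lvert L(u)\cap L(v)\rvert\big)m^{n-2}=m^{n-2}\Delta$, where $\Delta:=\sum_{uv\in E(G)}\big(m-\lvert L(u)\cap L(v)\rvert\big)\ge0$; this term is nonnegative and is the quantity that must dominate. If $\Delta=0$ then $L(u)=L(v)$ for every edge, so by connectivity $L$ is the constant assignment and $P(G,L)=P(G,m)$; hence I may assume $\Delta\ge1$. For the remaining subsets, every bracket satisfies $0\le\prod_C a_C(S)\le m^{c(S)}$ because each $a_C(S)\le m$, and the sharper submultiplicative estimate $\prod_C a_C(S)\ge m^{c(S)}-m^{c(S)-1}\sum_{e\in F(S)}\delta_e$ along a spanning forest $F(S)$ of $S$ (with $\delta_e=m-\lvert L(u)\cap L(v)\rvert$) lets each term be controlled by the very defects $\delta_e$ that build $\Delta$.

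The crux is to bound the tail $\sum_{\lvert S\rvert\ge2}(-1)^{\lvert S\rvert}\big(\prod_C a_C(S)-m^{c(S)}\big)$ and to show it cannot cancel more than the anchor $m^{n-2}\Delta$. A purely absolute-value (triangle-inequality) bound is hopeless here: it effectively counts all $2^{l}$ subsets and yields only an exponential threshold, so the sign structure must be preserved. I would therefore organize the subsets by the spanning forest (equivalently, the rank) they induce and by the edge defects, grouping subsets that share a forest so that the alternating signs collapse the overcounting (this is where, for instance, the three two-edge subsets and the full-triangle subset of a $K_3$ partly cancel). The resulting estimate should reduce the inequality to a clean scalar condition in which the relevant weighted subgraph count grows at the silver-ratio rate governed by the Pell recurrence $t^2=2t+1$, whose positive root is $1+\sqrt2$; domination then holds precisely when $e^{(l-1)/m}<1+\sqrt2$, i.e.\ $(e^{(l-1)/m})^2-2e^{(l-1)/m}-1<0$, which is exactly equivalent to $m>\frac{l-1}{\ln(1+\sqrt2)}$.

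I expect the main obstacle to be precisely this last step: extracting the \emph{sharp} linear constant from the alternating sum. Merely proving some linear threshold is comparatively soft, but controlling the signs well enough to replace the naive exponential bound by the silver-ratio bound is the delicate part, and it is what pins the constant down to $\ln(1+\sqrt2)$ rather than anything larger.
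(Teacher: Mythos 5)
First, a point of reference: the paper does not prove this statement at all --- Theorem~\ref{thm: WQ17} is quoted from Wang, Qian, and Yan~\cite{WQ17} as a known result, so there is no in-paper proof to compare yours against. Judged on its own terms, your setup is sound and does match the general strategy of the cited proof: the reduction to showing $P(G,L)\ge P(G,m)$ for every $m$-assignment, the edge-subset expansion $P(G,L)=\sum_{S\subseteq E(G)}(-1)^{|S|}\prod_C\bigl|\bigcap_{v\in C}L(v)\bigr|$, the identification of the anchor term $m^{n-2}\Delta$ from singleton subsets, the observation that $\Delta=0$ forces $L$ to be constant on a connected graph, and the algebraic equivalence of the threshold to $e^{(l-1)/m}<1+\sqrt{2}$ (i.e.\ $\sinh((l-1)/m)<1$) are all correct.

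The genuine gap is that the entire content of the theorem --- the bound on the alternating tail $\sum_{|S|\ge 2}(-1)^{|S|}\bigl(\prod_C a_C(S)-m^{c(S)}\bigr)$ --- is asserted rather than proved. You correctly diagnose that a triangle-inequality bound gives only an exponential threshold and that the sign structure must be exploited, but the sentences ``grouping subsets that share a forest so that the alternating signs collapse the overcounting'' and ``the resulting estimate should reduce the inequality to a clean scalar condition\ldots governed by the Pell recurrence'' describe the desired outcome, not a mechanism. In particular: (a) grouping by spanning forest does not obviously produce cancellation against the defects $\delta_e$, since the inner alternating sums over subsets with a fixed component partition involve M\"{o}bius-type quantities of the partition lattice, not the $\delta_e$; and (b) the appearance of $\binom{l-1}{k}$ (whence $(1+1/m)^{l-1}-(1-1/m)^{l-1}\le 2$ and the constant $\ln(1+\sqrt{2})$) requires a specific charging/injection argument that attributes each contributing $k$-subset to one defect edge together with $k-1$ edges avoiding a fixed edge --- this is the key lemma of \cite{WQ17} and it is absent here. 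Without it you have reproduced the frame of the proof and reverse-engineered where the constant must come from, but not proved the inequality.
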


It is also known that $P_{\ell}(G,m)=P(G,m)$ for all $m \in \N$ when $G$ is a cycle or chordal (see~\cite{KN16} and~\cite{AS90}).  Moreover, if $P_{\ell}(G,m)=P(G,m)$ for all $m \in \N$, then $P_{\ell}(K_n \vee G,m)=P(K_n \vee G,m)$ for each $n, m \in \N$ (see~\cite{KM18}). See~\cite{T09} for a survey of known results and open questions on the list color function.

In 2019, Kaul and the first author introduced a DP-coloring analogue of the chromatic polynomial in hopes of gaining a better understanding DP-coloring and using it as a tool for making progress on some open questions related to the list color function~\cite{KM19}.  Specifically, suppose $\mathcal{H} = (L,H)$ is a cover of graph $G$.  Let $P_{DP}(G, \mathcal{H})$ be the number of $\mathcal{H}$-colorings of $G$.  Then, the \emph{DP color function} of $G$, $P_{DP}(G,m)$, is the minimum value of $P_{DP}(G, \mathcal{H})$ where the minimum is taken over all possible $m$-fold covers $\mathcal{H}$ of $G$.~\footnote{We take $\N$ to be the domain of the DP color function of any graph.}  It is easy to show that for any graph $G$ and $m \in \N$, $P_{DP}(G, m) \leq P_\ell(G,m) \leq P(G,m)$.~\footnote{To prove this, recall that for any $m$-assignment $L$ for $G$, an $m$-fold cover $\mathcal{H}'$ of $G$ such that $G$ has an $\mathcal{H}'$-coloring if and only if $G$ has a proper $L$-coloring is constructed in~\cite{BK17}.  It is easy to see from the construction in~\cite{BK17} that there is a bijection between the proper $L$-colorings of $G$ and the $\mathcal{H}'$-colorings of $G$.}  Note that if $G$ is a disconnected graph with components: $H_1, H_2, \ldots, H_t$, then $P_{DP}(G, m) = \prod_{i=1}^t P_{DP}(H_i,m)$.  So, we will only consider connected graphs from this point forward unless otherwise noted.

As with list coloring and DP-coloring, the list color function and DP color function of certain graphs behave similarly.  However, for some graphs there are surprising differences.  For example, similar to the list color function,  $P_{DP}(G,m) = P(G,m)$ for every $m \in \N$  whenever $G$ is chordal or an odd cycle~\cite{KM19}.  On the other hand, we have the following two results.

\begin{thm} [\cite{KM19}] \label{thm: evengirth}
If $G$ is a graph with girth that is even, then there is an $N \in \N$ such that $P_{DP}(G,m) < P(G,m)$ whenever $m \geq N$.  Furthermore, for any integer $g \geq 3$ there exists a graph $H$ with girth $g$ and an $N \in \N$ such that $P_{DP}(H,m) < P(H,m)$ whenever $m \geq N$. 
\end{thm}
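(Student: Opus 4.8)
The plan is to prove part one by exhibiting a single well-chosen $m$-fold cover whose number of colorings is eventually smaller than $P(G,m)$. Let $g$ be the (even) girth, fix a shortest cycle $C$ and an edge $e=u_0v_0$ of $C$, and let $\mathcal{H}$ be the cover with a canonical labeling in which every edge carries the identity matching except $e$, on which we place a fixed-point-free matching $(u_0,j)(v_0,\pi(j))$ for some derangement $\pi$ of $[m]$. Then an $\mathcal{H}$-coloring is exactly a proper coloring $c$ of $G-e$ with $c(v_0)\ne\pi(c(u_0))$, whereas $P(G,m)$ counts proper colorings of $G-e$ with $c(u_0)\ne c(v_0)$. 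Writing $X$ and $Y$ for the number of proper colorings of $G-e$ with $c(u_0)=c(v_0)$ and with $c(v_0)=\pi(c(u_0))$ respectively, both counts arise from $P(G-e,m)$ by subtracting $X$ and $Y$, so $P(G,m)-P_{DP}(G,\mathcal{H})=Y-X$, and it suffices to show $Y-X>0$ for all large $m$.

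Next I would evaluate $X$ and $Y$. Identifying $u_0$ and $v_0$ shows $X=P(G/e,m)$. For $Y$, since $\pi$ has no fixed point and permuting colors is a symmetry of the count, every summand $N(a,\pi(a))$ equals the common value $N(a,b)$ (for $a\ne b$) of the number of proper colorings of $G-e$ with prescribed distinct colors on $u_0,v_0$; summing the relation $P(G,m)=\sum_{a\ne b}N(a,b)=m(m-1)N(a,b)$ then gives $Y=P(G,m)/(m-1)$. Hence $Y-X=\frac{P(G,m)}{m-1}-P(G/e,m)$, and after clearing denominators (and using deletion--contraction) the task becomes showing $P(G-e,m)-mP(G/e,m)>0$.

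This is where the girth enters. Using Whitney's subgraph expansion $P(\,\cdot\,,m)=\sum_S(-1)^{|S|}m^{c(S)}$ together with the bijection between the edges of $G-e$ and of $G/e$ (valid because girth $\ge 4$ forces $u_0,v_0$ to have no common neighbor), the terms indexed by edge sets $S$ that do not connect $u_0$ to $v_0$ cancel, leaving (with $n=|V(G)|$)
\[
Y-X=\sum_{\substack{S\subseteq E(G-e)\\ u_0,v_0 \text{ joined in } S}}(-1)^{|S|+1}m^{c(S)}.
\]
Any connecting $S$ uses at least $g-1$ edges, with equality precisely when $S$ is a shortest $u_0$--$v_0$ path (equivalently $C'-e$ for a shortest cycle $C'$ through $e$); such an $S$ has the maximum possible $c(S)=n-g+1$ and contributes with sign $(-1)^{(g-1)+1}=(-1)^{g}=+1$ since $g$ is even. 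Thus the top-degree term of $Y-X$ is $(\text{number of } g\text{-cycles through } e)\,m^{n-g+1}$ with positive coefficient, so $Y-X>0$ for all large $m$ and $P_{DP}(G,m)\le P_{DP}(G,\mathcal{H})<P(G,m)$. I expect this sign bookkeeping --- verifying that all remaining terms are genuinely of lower degree and that the surviving leading coefficient is positive exactly because the relevant shortest cycle is even --- to be the crux.

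The same computation proves the \emph{furthermore} statement once it is read as a criterion: twisting an edge $e$ whose endpoints have no common neighbor decreases the color count for large $m$ whenever the shortest cycle through $e$ is even, since the leading sign above is $(-1)^{\ell_e+1}$, where $\ell_e+1$ is that cycle length. For even $g$ one may take $H=C_g$. For odd $g\ge 3$, I would let $H$ consist of a copy of $C_g$ and a copy of $C_{g+1}$ joined by a single (bridge) edge: its only cycles are $C_g$ and $C_{g+1}$, so its girth is $g$, yet the bridge lies on no cycle and hence the unique shortest cycle through any edge $e$ of $C_{g+1}$ is $C_{g+1}$ itself, which is even. Twisting such an $e$ then yields $P_{DP}(H,m)<P(H,m)$ for all large $m$, completing the proof.
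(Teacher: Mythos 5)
This statement is quoted from \cite{KM19}; the present paper gives no proof of it, so there is nothing internal to compare your argument against. Judged on its own, your proof is correct and complete. The reduction $P(G,m)-P_{DP}(G,\mathcal{H})=Y-X=\frac{P(G-e,m)-mP(G/e,m)}{m-1}$ is right (via $Y=P(G,m)/(m-1)$ from the color-permutation symmetry, $X=P(G/e,m)$, and deletion--contraction), and the Whitney-expansion cancellation over non-connecting edge sets is valid because even girth forces girth $\geq 4$, so $u_0,v_0$ have no common neighbour and $E(G-e)\leftrightarrow E(G/e)$ is a genuine bijection. The one place that deserves an explicit sentence is the equality case in the degree count: a connecting $S$ with $c(S)=n-g+1$ has its $u_0v_0$-component on exactly $g$ vertices, and you must rule out that this component contains a cycle (a $g$-cycle through $u_0$ and $v_0$ avoiding $e$ would, together with $e$, produce a cycle of length at most $\lfloor g/2\rfloor+1<g$, contradicting the girth), so the only extremal $S$ are indeed the paths $C'-e$ and the leading coefficient is the positive count of $g$-cycles through $e$. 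Your ``furthermore'' construction ($C_g$ and $C_{g+1}$ joined by a bridge, twisting an edge of $C_{g+1}$) also checks out: the bridge lies on no cycle, the girth is $g$, the only connecting sets contain the path $C_{g+1}-e$, and the leading term $m^{n-g}$ has sign $(-1)^{g+1}=+1$ for odd $g$. This single-twisted-edge cover is essentially the same mechanism \cite{KM19} exploits, and it is consistent with the asymptotics quoted in this paper (e.g.\ $\Theta(m^{n-3})$ for unicyclic graphs with a $4$-cycle).
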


This result is particularly surprising since Theorem~\ref{thm: WQ17} implies that the list color function of any graph eventually equals its chromatic polynomial.  The following is also known.

\begin{thm} [\cite{KM19}] \label{thm: asymptotic}
For any graph $G$ with $n$ vertices,
$$P(G,m)-P_{DP}(G,m) = O(m^{n-2}) \; \; \text{as $m \rightarrow \infty$.}$$
\end{thm}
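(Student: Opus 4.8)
The plan is to first reduce to covers whose matchings are as large as possible, and then to compare the inclusion--exclusion expansions of $P(G,m)$ and $P_{DP}(G,\mathcal{H})$ term by term.

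First I would observe that it suffices to bound $P(G,m)-P_{DP}(G,\mathcal{H})$ over $m$-fold covers $\mathcal{H}$ all of whose matchings $E_H(L(u),L(v))$, $uv\in E(G)$, are \emph{perfect} (size $m$). Indeed, given any $m$-fold cover $\mathcal{H}=(L,H)$, one may extend each such matching to a perfect matching by adding cross-edges; the result $\mathcal{H}'=(L,H')$ is again an $m$-fold cover, and since $H\subseteq H'$ every $\mathcal{H}'$-coloring is an $\mathcal{H}$-coloring, so $P_{DP}(G,\mathcal{H}')\le P_{DP}(G,\mathcal{H})$. Hence $P_{DP}(G,m)$ equals the minimum of $P_{DP}(G,\mathcal{H})$ taken over covers with all matchings perfect. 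For such a cover I would fix a canonical-style labeling $L(u)=\{(u,j):j\in[m]\}$, so that for each $uv\in E(G)$ the matching is described by a permutation $\pi_{uv}$ of $[m]$ with $(u,j)$ matched to $(v,\pi_{uv}(j))$; an $\mathcal{H}$-coloring is then exactly a map $\phi:V(G)\to[m]$ with $\phi(v)\neq\pi_{uv}(\phi(u))$ for every edge $uv$.

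Next I would expand both quantities by inclusion--exclusion over edge subsets. Writing $c(S)$ for the number of components of the spanning subgraph $(V(G),S)$, the classical Whitney expansion gives $P(G,m)=\sum_{S\subseteq E(G)}(-1)^{|S|}m^{c(S)}$, while the same argument applied to the forbidden pattern $\phi(v)=\pi_{uv}(\phi(u))$ yields $P_{DP}(G,\mathcal{H})=\sum_{S\subseteq E(G)}(-1)^{|S|}N(S)$, where $N(S)$ counts the maps $\phi$ satisfying $\phi(v)=\pi_{uv}(\phi(u))$ for all $uv\in S$. Subtracting,
\[
P(G,m)-P_{DP}(G,\mathcal{H})=\sum_{S\subseteq E(G)}(-1)^{|S|}\bigl(m^{c(S)}-N(S)\bigr).
\]

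The crucial observation, which I expect to be the heart of the argument, is that $N(S)=m^{c(S)}$ whenever $S$ is a forest: on each tree component the value of $\phi$ at a chosen root ranges freely over $[m]$ and is then propagated bijectively along the tree edges by the permutations $\pi_{uv}$, so every one of the $c(S)$ components contributes exactly a factor of $m$. Thus the DP and chromatic expansions agree on every acyclic $S$, and only edge sets $S$ containing a cycle survive in the displayed difference. For any such $S$, the component carrying a cycle spans at least $3$ vertices while remaining a single component, so $c(S)\le 1+(n-3)=n-2$; since $0\le N(S)\le m^{c(S)}$ we get $|m^{c(S)}-N(S)|\le m^{c(S)}\le m^{n-2}$. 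As there are at most $2^{|E(G)|}$ subsets $S$, the difference is bounded in absolute value by $2^{|E(G)|}m^{n-2}$ for every perfect-matching cover; combined with the minimizing cover and the known inequality $P_{DP}(G,m)\le P(G,m)$, this gives $0\le P(G,m)-P_{DP}(G,m)\le 2^{|E(G)|}m^{n-2}$, which is the claimed $O(m^{n-2})$ bound. The two steps meriting careful verification are the monotonicity reduction to perfect matchings and the forest identity $N(S)=m^{c(S)}$, since it is precisely these that localize the discrepancy to cyclic subgraphs and force the degree to drop to $n-2$.
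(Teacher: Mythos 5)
Your proposal is correct, and it is essentially the same argument the paper builds its machinery around: the reduction to covers with perfect matchings, the inclusion--exclusion expansion over edge subsets (the sets $S_i$ and $\mathcal{U}$ of Subsection 2.1), and the key fact that acyclic edge sets contribute exactly $m^{c(S)}$ (Lemma 10(i), via Propositions 8 and 9) while cyclic ones have at most $n-2$ components and are bounded by $m^{c(S)}$. The paper pushes this same analysis further to prove the sharper Theorem 4 ($O(m^{n-g})$ for girth $g$ or $g+1$ with $g$ odd), of which the stated bound is the coarse consequence.
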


In studying the tightness of Theorem~\ref{thm: asymptotic}, the authors of~\cite{KM19} mentioned that if $G$ is a unicyclic graph~\footnote{A \emph{unicyclic graph} is a connected graph containing exactly one cycle.} on $n$ vertices that contains a cycle of length 4, then $P(G,m)-P_{DP}(G,m) = \Theta(m^{n-3})$.  However, they stated that ``we do not have an example of a graph $G$ such that $P(G,m)-P_{DP}(G,m) = \Theta(m^{n-2})$."  Motivated by a result of Bernshteyn, Kostochka, and Zhu~\cite{BK18} that says for any graph $G$ there exists an $N \leq 3|E(G)|$ such that $\chi_{DP}(K_p \vee G) = \chi(K_p \vee G)$ whenever $p \geq N$, the authors of~\cite{KM19} also studied $P_{DP}(K_p \vee G, m)$. Interestingly, it turns out that the question of whether there exist $p,N \in \N$ such that $P_{DP}(K_p \vee G, m) = P(K_p \vee G , m)$ whenever $m \geq N$ is related to the asymptotics of $P(G,m)-P_{DP}(G,m)$. In fact, the following two questions were both posed in~\cite{KM19}.  These two questions are the focus of this note.  

\begin{ques} \label{ques: asymptotic}
For any graph $G$ with $n$ vertices, is it the case that $P(G,m)-P_{DP}(G,m) = O(m^{n-3})$ as $m \rightarrow \infty$?
\end{ques}

\begin{ques} \label{ques: join}
For every graph $G$, does there exist $p,N \in \N$ such that $P_{DP}(G \vee K_p, m) = P(G \vee K_p , m)$ whenever $m \geq N$?
\end{ques}

In~\cite{KM19} it is shown that if the the answer to Question~\ref{ques: asymptotic} is yes, then the answer to Question~\ref{ques: join} must be yes.  We will show that the answer to Question~\ref{ques: asymptotic} is yes.  This of course implies that the answer to Question~\ref{ques: join} is yes, but we will show that its answer is yes even when $p$ is fixed to 1.

\subsection{Summary of Results}

We begin by showing the following.

\begin{thm} \label{thm: general}
Suppose $g$ is an odd integer with $g \geq 3$.  If $G$ is a graph on $n$ vertices with girth $g$ or $g+1$, then $P(G,m) - P_{DP}(G,m) = O(m^{n-g})$ as $m \rightarrow \infty$.  Consequently, $P(M,m) - P_{DP}(M,m) = O(m^{|V(M)|-3})$ as $m \rightarrow \infty$ for any graph $M$. 
\end{thm}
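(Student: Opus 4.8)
The plan is to bound the difference uniformly over covers. Since $P_{DP}(G,m)=\min_{\mathcal H}P_{DP}(G,\mathcal H)$, it suffices to produce a constant $C$ (depending only on $G$) with $P(G,m)-P_{DP}(G,\mathcal H)\le C\,m^{n-g}$ for every $m$-fold cover $\mathcal H$ of $G$, and then take the maximum over $\mathcal H$. First I would reduce to the nicest covers: enlarging any matching $E_H(L(u),L(v))$ to a perfect matching only deletes $\mathcal H$-colorings, so the difference is largest on covers all of whose matchings are perfect; for such a cover I identify each $L(v)$ with $[m]$ and encode the cross-edges over $uv$ by a permutation. Fixing a spanning tree $T$ of $G$ and permuting the labels at each vertex from the root outward, I can make every tree edge canonical, so the cover is recorded by permutations $\sigma_e\in S_m$ indexed by the non-tree edges $e\in B:=E(G)\setminus E(T)$, and an $\mathcal H$-coloring becomes a map $f\colon V(G)\to[m]$ that is proper on $T$ and satisfies $f(v)\neq\sigma_e(f(u))$ for every $e=uv\in B$.

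With this normalization both quantities are inclusion--exclusion sums over $B$. Writing $\Omega$ for the set of $T$-proper colorings, $C_e=\{f:f(u)=f(v)\}$ and $D_e=\{f:f(v)=\sigma_e(f(u))\}$, I obtain
\[
P(G,m)-P_{DP}(G,\mathcal H)=\sum_{\emptyset\neq R\subseteq B}(-1)^{|R|}\bigl(\Phi_C(R)-\Phi_D(R)\bigr),
\]
where $\Phi_C(R)=|\Omega\cap\bigcap_{e\in R}C_e|$ and $\Phi_D(R)=|\Omega\cap\bigcap_{e\in R}D_e|$ (the $R=\emptyset$ terms both equal $|\Omega|$ and cancel). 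The heart of the argument is to compute each $\Phi$ as a transfer-matrix contraction over the graph $T\cup R$: tree edges carry $A=J-I$, the ``$\neq$'' operator on $[m]$, while an edge of $R$ carries the identity $I$ in $\Phi_C$ and the permutation matrix $\Sigma_e$ in $\Phi_D$. The decisive observation is that every permutation matrix fixes the all-ones vector $\mathbf 1$, the top eigenvector of $A$ (eigenvalue $m-1$). Splitting $\mathbb R^m=\langle\mathbf 1\rangle\oplus\mathbf 1^{\perp}$, the highest-degree-in-$m$ part of either contraction comes entirely from the $\mathbf 1$-channel, on which $\Sigma_e$ and $I$ act identically. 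Hence these top parts agree and cancel in $\Phi_C(R)-\Phi_D(R)$; what survives must route through $\mathbf 1^{\perp}$ around at least one cycle of $T\cup R$, and since every cycle of $G$ has length $\ge g$, this caps the degree of the surviving difference at $n-g+1$.

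A direct transfer along a single fundamental cycle $O_e$, which has length $|O_e|\ge g$, gives
\[
\Phi_C(e)-\Phi_D(e)=(-1)^{|O_e|-1}\,\bigl(m-|\mathrm{Fix}\,\sigma_e|\bigr)\,(m-1)^{\,n-|O_e|},
\]
of degree $n-|O_e|+1\le n-g+1$. Two features then make everything fall into place. When the girth is $g+1$ there is no cycle of length $g$, so every fundamental cycle already has length $\ge g+1$ and each single-edge term---hence, after checking that $|R|\ge2$ only lowers the degree further, the whole sum---is $O(m^{n-g})$. When the girth is $g$ (odd), the only terms reaching degree $n-g+1$ come from single edges $e$ with $|O_e|=g$; there $|O_e|-1=g-1$ is even, so such a term contributes $-(m-|\mathrm{Fix}\,\sigma_e|)(m-1)^{n-g}\le0$ to $P(G,m)-P_{DP}(G,\mathcal H)$. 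Thus the degree-$(n-g+1)$ part of the difference is non-positive for every cover, so $P(G,m)-P_{DP}(G,\mathcal H)\le(\text{terms of degree}\le n-g)=O(m^{n-g})$ uniformly in $\mathcal H$, proving the first assertion. The ``consequently'' statement is then immediate: an arbitrary graph $M$ containing a cycle has girth equal to some odd $g\ge3$ or to $g+1$ for some odd $g\ge3$, so the first part yields $O(m^{|V(M)|-g})\subseteq O(m^{|V(M)|-3})$ (and $P_{DP}=P$ when $M$ is a forest).

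The step I expect to be the main obstacle is the uniform degree bound for $|R|\ge2$. There the fundamental cycles $O_e$ may share tree paths, so $\Phi_C(R)$ and $\Phi_D(R)$ are genuine multi-cycle tensor contractions rather than a single transfer trace, and one must show both that escaping the $\mathbf 1$-channel still costs a full cycle of length $\ge g$ and that having two or more twisted edges strictly improves the bound to $n-g$. Controlling this carefully---together with the fact that the surviving coefficients involve $|\mathrm{Fix}\,\sigma_e|$, which grows with $m$, so that the estimate must be forced to hold for all $m$ rather than only asymptotically---is where the real work lies; the channel-cancellation principle above is what makes it tractable.
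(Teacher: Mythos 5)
Your strategy is genuinely different from the paper's: you normalize the cover along a spanning tree, push all the twisting onto the cotree edges, and compare the two inclusion--exclusion sums over subsets $R$ of cotree edges via the splitting $\mathbb{R}^m=\langle\mathbf{1}\rangle\oplus\mathbf{1}^{\perp}$. Your single-edge computation is correct, and your sign observation --- for odd $g$ a fundamental cycle of length $g$ contributes $-(m-\lvert\mathrm{Fix}\,\sigma_e\rvert)(m-1)^{n-g}\le 0$ to $P(G,m)-P_{DP}(G,\mathcal{H})$ --- captures exactly the phenomenon that makes the theorem true. But the argument has a genuine gap, and you name it yourself: nothing in the write-up establishes that $\Phi_C(R)-\Phi_D(R)=O(m^{n-g})$ for $\lvert R\rvert\ge 2$, uniformly in the permutations $\sigma_e$. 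This is not deferred bookkeeping; it is the crux. When the fundamental cycles of the edges in $R$ share tree paths, the surviving $\mathbf{1}^{\perp}$-contributions involve traces of products of the $\Sigma_e$ restricted to $\mathbf{1}^{\perp}$, and quantities such as $\lvert\mathrm{Fix}(\sigma_e\sigma_{e'})\rvert$ can themselves be of order $m$, so it is not automatic that each additional twisted edge lowers the degree, nor that the coefficients stay bounded as the $\sigma_e$ range over $S_m$. A proof that leaves this step as ``where the real work lies'' is not a proof.

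For contrast, the paper avoids the tree/cotree decomposition entirely and runs inclusion--exclusion over all subsets $A\subseteq E(G)$ (Lemma~\ref{lem: formulas2}). The only inputs are: (i) for $\lvert A\rvert\le g-1$ the spanning subgraph is a forest, so the intersection count is exactly $m^{n-\lvert A\rvert}$ by Propositions~\ref{pro: tree} and~\ref{pro: obvious}, and these terms match the chromatic polynomial coefficients exactly; (ii) for $\lvert A\rvert=g$ the count is at most $m^{n-g+1}$, and the possible excess over $m^{n-g}$ occurs only when $A$ is a $g$-cycle, which enters the alternating sum with sign $(-1)^g=-1$ and hence only increases $P_{DP}$ (this is the same use of the parity of $g$ as your sign argument); and (iii) for $\lvert A\rvert\ge g+1$ the count is at most $m^{n-g}$, so all remaining terms are absorbed into a $2^{s}m^{n-g}$ error. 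These crude monotone bounds sidestep precisely the multi-cycle interaction your approach leaves open. If you want to complete your route, you need a lemma bounding $\deg_m(\Phi_C(R)-\Phi_D(R))$ by $n-g$ for every $R$ with $\lvert R\rvert\ge 2$, with coefficients bounded independently of the $\sigma_e$; as it stands, that missing lemma carries essentially the full weight of the theorem.
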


When considering the third sentence of Theorem~\ref{thm: general}, recall that if the girth of a graph is infinite, then the graph is acyclic and therefore chordal which means the DP color function of the graph is always equal to its chromatic polynomial.  A result in~\cite{KM19} implies that if $G$ is a unicyclic graph on $n$ vertices with girth $2k+2$ where $k \in \N$, then $P_{DP}(G,m) = \Theta(m^{n-2k-1})$.  It can also be shown that for any odd integer $g$ with $g \geq 3$, if $G$ consists of a cycle on $g$ vertices and a cycle on $g+1$ vertices such that the cycles share exactly one vertex, then $P(G,m) - P_{DP}(G,m) = \Theta(m^{(2g)-g})$.  This demonstrates the tightness of Theorem~\ref{thm: general} for all possible girths. 

We end this note by proving the following.

\begin{thm} \label{thm: cone}
For any graph $G$, there is an $N \in \N$ such that $P_{DP}(K_1 \vee G,m) = P(K_1 \vee G,m)$ whenever $m \geq N$.   
\end{thm}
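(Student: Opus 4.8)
The plan is to prove the reverse of the inequality that always holds. Since $P_{DP}(K_1\vee G,m)\le P(K_1\vee G,m)$ for every $m$, and since the canonical cover of $K_1\vee G$ attains $P(K_1\vee G,m)=m\,P(G,m-1)$, it suffices to show that for every $m$-fold cover $\mathcal{H}=(L,H)$ of $K_1\vee G$ and every sufficiently large $m$,
\[
P_{DP}(K_1\vee G,\mathcal{H})\ \ge\ m\,P(G,m-1).
\]
First I would dispose of the degenerate case in which $G$ has no edges (then $K_1\vee G$ is a tree, hence chordal, and equality holds for all $m$ by the results quoted for chordal graphs), so that from now on $K_1\vee G$ has girth $3$.

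Next I would decompose the count according to the color of the apex $u$. Writing $L(u)=\{(u,j):j\in[m]\}$, requirement (4) forces $E_H(L(u),L(v))$ to be a matching, so the choice $u\mapsto(u,j)$ forbids at most one color at each $v\in V(G)$. Hence the number of $\mathcal{H}$-colorings with $u\mapsto(u,j)$ equals $N_j:=P_{DP}(G,\mathcal{H}^{(j)})$, where $\mathcal{H}^{(j)}$ is the cover of $G$ that $\mathcal{H}$ induces on $\bigcup_v L(v)$ after deleting, at each $v$, the at most one color matched to $(u,j)$; every list of $\mathcal{H}^{(j)}$ then has size $m-1$ or $m$, and $P_{DP}(K_1\vee G,\mathcal{H})=\sum_{j=1}^m N_j$. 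Since the proper $m$-colorings of $G$ omitting a fixed color are exactly the proper $(m-1)$-colorings, we have $m\,P(G,m-1)=\sum_{j=1}^m P(G,m-1)$, so the goal becomes $\sum_{j=1}^m\bigl(N_j-P(G,m-1)\bigr)\ge 0$.

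The crucial point—and the reason one cannot argue summand by summand—is that $N_j\ge P(G,m-1)$ can \emph{fail} for individual $j$: if $\mathcal{H}$ carries a ``twisted'' even cycle, then deleting a color fixed by that twist leaves a genuinely deficient $(m-1)$-fold cover, giving $N_j<P(G,m-1)$ for those $j$. What rescues the sum is that the deletions for different $j$ are linked through the spoke matchings: deleting a color on the vertices of a cycle $C$ of $G$ generically destroys the matching on one edge of $C$, and a cycle with a deficient matching carries strictly more colorings than the canonical count. To make this quantitative I would expand by inclusion–exclusion (the Whitney-type expansion used for Theorem~\ref{thm: asymptotic}),
\[
\sum_{j=1}^m N_j-m\,P(G,m-1)=\sum_{S\subseteq E(G)}(-1)^{|S|}\Bigl(\sum_{j=1}^m N_{\mathcal{H}^{(j)}}(S)-m\,(m-1)^{c(S)}\Bigr),
\]
where $N_{\mathcal{H}^{(j)}}(S)$ counts the assignments making every edge of $S$ tight and $c(S)$ is the number of components of $(V(G),S)$. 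Because $G$ has girth at least $3$, any $S$ containing a cycle satisfies $|S|\ge 3$ and $c(S)\le |V(G)|-2$, so Theorem~\ref{thm: general} together with the cycle bookkeeping behind it controls the size of the potentially negative contributions, while the matching defects produced by the deletions furnish the compensating positive terms.

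The hard part will be the sign-and-size analysis in this last step. The leading terms of $\sum_j N_j$ and of $m\,P(G,m-1)$ are both of order $m^{\,|V(G)|+1}$ and cancel exactly, so the inequality is decided at the next orders, where the beneficial defect terms and the harmful fixed-point-deletion terms are a priori of the same total order. I would attack this by reducing to the extremal configuration in which every spoke matching is perfect—arguing that deficient spokes only increase the relevant counts through the acyclic part of the expansion—and then, after a global relabeling that makes the spokes canonical (so that $u\mapsto(u,j)$ deletes the single color $j$ from every vertex at once), analyzing the cyclic subgraphs of $G$ one cycle at a time. For a single twisted cycle this becomes a transfer-matrix computation exhibiting a nonnegative polynomial surplus (for $C_4$ one obtains a surplus of $m(m-2)(m-4)$), and the main technical burden is to carry this out uniformly over all covers and all cycles so as to obtain nonnegativity for every large $m$.
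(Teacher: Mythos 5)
Your setup is sound and you have correctly located the difficulty: the apex decomposition $P_{DP}(K_1\vee G,\mathcal{H})=\sum_{j=1}^m N_j$ is valid, $m\,P(G,m-1)=P(K_1\vee G,m)$, and individual summands $N_j$ can indeed drop below $P(G,m-1)$ (this is exactly the phenomenon behind Theorem~\ref{thm: evengirth}), so the inequality must be won globally. But the proposal stops exactly where the real work begins, and the route you sketch for that work has a concrete obstruction. Everything hinges on exhibiting a compensating surplus whose size is bounded below, \emph{uniformly over all covers} $\mathcal{H}$, by a quantity that dominates every negative error term. The paper isolates this surplus precisely: after normalizing the spoke matchings to be canonical (legitimate by Proposition~\ref{pro: tree}, since the spokes form a spanning tree of $K_1\vee G$), let $x_{\mathcal{H}}$ be the number of cross-edges of $H$ lying over edges of $G$ that join vertices with differing second coordinates. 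If $x_{\mathcal{H}}=0$ the cover is canonical and there is nothing to prove; if $x_{\mathcal{H}}>0$, then each such twisted cross-edge over $e_i=u_iv_i$ destroys exactly $m^{n-3}$ of the transversals inducing a full copy of the triangle $wu_iv_i$ (here $n=|V(K_1\vee G)|$), so the $k=3$ inclusion--exclusion term, which enters with a minus sign, drops by $x_{\mathcal{H}}m^{n-3}$, yielding a surplus of at least $m^{n-3}$. Your proposal never identifies this (or any other) explicit source of gain at order $m^{n-3}$; the sentence ``the matching defects produced by the deletions furnish the compensating positive terms'' is the entire content of the theorem and is left unproved.

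Second, even granting a surplus of order $x_{\mathcal{H}}m^{n-3}$, one must bound the harmful terms by $C(G)\,x_{\mathcal{H}}m^{n-4}+C'(G)\,m^{n-4}$ with constants depending only on $G$; note that $x_{\mathcal{H}}$ can be as large as $|E(G)|\cdot m$, so a bound of the shape $O(x_{\mathcal{H}}^2m^{n-5})$ would \emph{not} suffice. The paper obtains the required linearity in $x_{\mathcal{H}}$ by observing that at most $2x_{\mathcal{H}}$ of the $m$ canonical layers $B_i$ of any fixed cyclic component can be destroyed by twists, since each twisted cross-edge has only two endpoints (Lemma~\ref{lem: three}, Statements (ii) and (iv)); nothing in your sketch plays this role. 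Finally, the proposed reduction to ``one cycle at a time'' via transfer matrices does not obviously close: cycles of $G$ may share edges, the permutations on distinct cycles are coupled through the shared matchings, and the claims that ``deficient spokes only increase the relevant counts through the acyclic part of the expansion'' and that a single global relabeling makes all spokes simultaneously canonical while preserving the cycle analysis are themselves nontrivial assertions requiring proof. As written, this is a plausible plan whose decisive quantitative step --- the one the paper spends Lemmas~\ref{lem: three} and~\ref{lem: lower} establishing --- is missing.
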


Theorem~\ref{thm: cone} shows that the DP color function of $K_1 \vee G$ behaves like the list color function of $K_1 \vee G$ since the DP color function of $K_1 \vee G$ eventually equals the chromatic polynomial of $K_1 \vee G$.  It is worth mentioning that in this note no attempt has been made to minimize the value of $N$ in Theorem~\ref{thm: cone}.  It would be interesting to study the threshold at which $P_{DP}(K_1 \vee G,m) = P(K_1 \vee G,m)$ for a given graph $G$.

\section{Proofs of Results} \label{main}

The key to proving our results is generalizing the proof technique of the following classical result to the context of DP-coloring.

\begin{thm} [\cite{W32}] \label{pro: base}
Suppose $G$ is a graph.  Then,
$$P(G,m) = \sum_{A \subseteq E(G)} (-1)^{|A|} m^{k_A}$$
where $k_A$ is the number of components of the spanning subgraph of $G$ with edge set $A$.
\end{thm}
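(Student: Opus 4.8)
The plan is to prove the identity by inclusion--exclusion over the edge set, classifying the (not necessarily proper) $m$-colorings of the vertices according to which edges they render monochromatic. First I would fix the universe: let $\Omega$ be the set of all functions $c \colon V(G) \to [m]$, so that $|\Omega| = m^{|V(G)|}$. For each edge $e = uv \in E(G)$, say that $c$ \emph{violates} $e$ if $c(u) = c(v)$. A function $c \in \Omega$ is a proper $m$-coloring exactly when it violates no edge, so $P(G,m)$ counts the elements of $\Omega$ that avoid every violation.

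The central step is the following counting lemma. For $A \subseteq E(G)$, let $N(A)$ denote the number of $c \in \Omega$ that violate every edge of $A$ (and possibly others as well). I claim $N(A) = m^{k_A}$. Indeed, $c$ violates every edge of $A$ if and only if $c$ is constant on each connected component of the spanning subgraph $G_A := (V(G), A)$: within a component, adjacency in $G_A$ forces equal colors, so by connectivity the whole component is monochromatic; conversely, any $c$ that is constant on each component violates all of $A$. Since the $k_A$ components may be colored independently with any of the $m$ colors, there are exactly $m^{k_A}$ such colorings.

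Finally I would apply inclusion--exclusion. Treating the $|E(G)|$ violation conditions as the ``bad'' properties, the number of $c \in \Omega$ that avoid all of them is
$$P(G,m) = \sum_{A \subseteq E(G)} (-1)^{|A|} N(A) = \sum_{A \subseteq E(G)} (-1)^{|A|} m^{k_A},$$
where the last equality invokes the lemma. As a sanity check, the term $A = \emptyset$ contributes $m^{|V(G)|}$, which is consistent with $k_\emptyset = |V(G)|$ since the empty spanning subgraph has $|V(G)|$ isolated-vertex components.

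I do not anticipate a genuine obstacle here: this is an elementary argument, and the only points demanding care are the sign bookkeeping in the inclusion--exclusion formula and the verification that ``violates all of $A$'' is equivalent to ``constant on the components of $G_A$.'' It is worth emphasizing that the argument is purely structural---it depends only on the partition of $V(G)$ into monochromatic classes forced by a chosen set of edges---which is precisely the feature that should allow the technique to be lifted to the DP-coloring setting in what follows.
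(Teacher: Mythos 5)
Your proof is correct and is the standard inclusion--exclusion argument for Whitney's theorem; the paper does not prove this statement itself (it is cited to Whitney~[W32]), but your argument is exactly the classical one that the paper explicitly says it generalizes to the DP-coloring setting via the sets $\mathcal{U}$ and $S_i$ at the start of Section~2.1. The key lemma $N(A) = m^{k_A}$ and the sign bookkeeping are both handled correctly.
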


The next four results will also be useful tools to keep in mind.

\begin{pro} [\cite{W32}] \label{pro: coefficients2}
Suppose $G$ is a graph on $n$ vertices.  Then there are nonnegative integers $a_0, \ldots, a_n$ such that $P(G,m) = \sum_{i=0}^{n} (-1)^i a_i m^{n-i}$.  Furthermore, if $G$ has $c$ components, then $a_0, \ldots, a_{n-c}$ are all positive integers, and $a_{n-c+1} = \cdots = a_n = 0$.  
\end{pro}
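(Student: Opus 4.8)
The plan is to prove the statement by induction on the number of edges of $G$, using the deletion--contraction recurrence rather than expanding Theorem~\ref{pro: base} directly, since deletion--contraction makes both the alternation of signs and the nonnegativity of the $a_i$ transparent (deriving nonnegativity straight from the expansion of Theorem~\ref{pro: base} would instead require the machinery of broken circuits). For an edge $e = uv$ of $G$, I write $G - e$ for the graph obtained by deleting $e$ and $G/e$ for the simple graph obtained by contracting $e$, that is, identifying $u$ and $v$ and deleting any parallel edges this creates. Splitting the proper $m$-colorings of $G - e$ according to whether $u$ and $v$ receive equal or distinct colors yields $P(G - e, m) = P(G, m) + P(G/e, m)$, hence the recurrence $P(G, m) = P(G - e, m) - P(G/e, m)$; note that the parallel edges possibly created by contraction do not affect coloring counts, so restricting to simple graphs causes no trouble, and both $G-e$ and $G/e$ have strictly fewer edges than $G$, so the induction is well-founded.

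For the base case I take $G$ edgeless on $n$ vertices, where $c = n$, $P(G, m) = m^n$, and the claim holds with $a_0 = 1$ and all higher $a_i = 0$. For the inductive step, suppose $e = uv \in E(G)$ and write
$$P(G - e, m) = \sum_{i=0}^{n} (-1)^i b_i\, m^{n-i}, \qquad P(G/e, m) = \sum_{j=0}^{n-1} (-1)^j c_j\, m^{(n-1)-j},$$
where the $b_i$ and $c_j$ satisfy the conclusion of the proposition by the inductive hypothesis, since $G - e$ has $n$ vertices and $G/e$ has $n-1$. Matching powers of $m$ in the recurrence gives $a_i = b_i + c_{i-1}$, with the convention $c_{-1} = 0$; this is immediately a nonnegative integer carrying the sign $(-1)^i$, which settles the first sentence of the proposition.

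The remaining work is to locate the strictly positive coefficients, and the only genuine bookkeeping is the behaviour of the component count under the two operations. Contracting an edge never changes the number of components, so $G/e$ has $c$ components; deletion splits into cases, namely if $e$ is not a bridge then $G - e$ still has $c$ components, while if $e$ is a bridge then $G - e$ has $c + 1$ components. Feeding the inductive ranges (that $b_i$ is positive exactly for $i$ up to $n$ minus the number of components of $G-e$, and that $c_j$ is positive exactly for $j$ up to $(n-1) - c$) into $a_i = b_i + c_{i-1}$ and checking the two cases separately verifies that $a_i > 0$ for $0 \le i \le n - c$ and $a_i = 0$ for $i > n - c$.

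The step I expect to require the most care is the bridge case at the boundary index $i = n - c$: here the deletion term vanishes, since $b_{n-c} = 0$ because $G - e$ has $c+1$ components, so positivity of $a_{n-c}$ must come entirely from the contraction term $c_{n-c-1}$, which is positive precisely because $n - c - 1 = (n-1) - c$ is the top index for which $G/e$'s coefficients are guaranteed positive. Confirming that this boundary matches up exactly, and that no coefficient beyond index $n - c$ survives in either case, is the crux of the argument; everything else reduces to routine sign-matching.
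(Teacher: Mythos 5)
Your proof is correct. Note that the paper itself gives no proof of this proposition --- it is quoted from Whitney \cite{W32}, and the route implicit in that citation (and in the surrounding Theorem~\ref{pro: base}) is the subgraph expansion $P(G,m)=\sum_{A\subseteq E(G)}(-1)^{|A|}m^{k_A}$, from which the sign-alternation and positivity of the coefficients are extracted by grouping the terms with $k_A=n-i$ and invoking the broken-circuit theorem to show the resulting signed counts do not cancel past index $n-c$. Your deletion--contraction induction sidesteps that machinery entirely: the recurrence $P(G,m)=P(G-e,m)-P(G/e,m)$ makes the identity $a_i=b_i+c_{i-1}$ immediate, so nonnegativity is free, and the only content is the component bookkeeping, which you handle correctly in both the bridge and non-bridge cases --- in particular the boundary index $i=n-c$ in the bridge case is covered by $c_{n-c-1}>0$ exactly as you say. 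What the expansion approach buys is a closed-form combinatorial interpretation of each $a_i$ (needed elsewhere in the paper, e.g.\ Proposition~\ref{pro: coefficients}); what your approach buys is a short, self-contained, elementary proof of exactly the statement at hand. The one point worth stating explicitly is that your induction is on the number of edges over \emph{all} graphs simultaneously (the vertex count drops under contraction), so the hypothesis must be phrased as ``every graph with fewer edges satisfies the proposition''; with that phrasing the argument is complete.
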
  

\begin{pro} [\cite{W32}] \label{pro: coefficients}
Suppose $G$ is a graph with $s$ edges and $n$ vertices having girth $g \in \N$.  Suppose $P(G,m) = \sum_{i=0}^{n} (-1)^i a_i m^{n-i}$.  Then, for $i = 0, 1, \ldots, g-2$
$$ a_i = \binom{s}{i} \; \; \text{and} \; \; a_{g-1} = \binom{s}{g-1} - t$$
where $t$ is the number of cycles of length $g$ contained in $G$.
\end{pro}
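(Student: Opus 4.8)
The plan is to expand $P(G,m)$ using Theorem~\ref{pro: base} and to read off the coefficients by grouping the subsets $A \subseteq E(G)$ according to the number of components $k_A$ of the spanning subgraph they induce. Writing $r(A) = n - k_A$ for the \emph{rank} of $A$ (the number of edges in a spanning forest of the subgraph with edge set $A$), Theorem~\ref{pro: base} becomes
$$P(G,m) = \sum_{A \subseteq E(G)} (-1)^{|A|} m^{n - r(A)},$$
so comparing with $P(G,m) = \sum_{i=0}^n (-1)^i a_i m^{n-i}$ yields
$$(-1)^i a_i = \sum_{A:\, r(A) = i} (-1)^{|A|}.$$
The whole argument then reduces to understanding, for each small $i$, exactly which edge sets $A$ satisfy $r(A) = i$. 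The numerical relation I would lean on is that the cyclomatic number $|A| - r(A)$ is nonnegative and vanishes precisely when $A$ is a forest, together with the girth observation that any edge set containing a cycle has at least $g$ edges and spans at least $g$ vertices.

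First I would handle $0 \le i \le g-2$. If $r(A) = i$ and $A$ contained a cycle, that cycle (of length $\ell \ge g$) would already have rank $\ell - 1 \ge g-1 > i$; since rank is monotone under inclusion this forces $r(A) \ge g-1 > i$, a contradiction. Hence every contributing $A$ is a forest, and for a forest $r(A) = |A|$, so $|A| = i$. Conversely, as $i < g$ every $i$-element subset of edges is acyclic, so there are exactly $\binom{s}{i}$ contributing sets, each carrying sign $(-1)^i$; therefore $a_i = \binom{s}{i}$.

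For $i = g-1$ the forests of size $g-1$ again contribute $(-1)^{g-1}\binom{s}{g-1}$, since every $(g-1)$-element subset is acyclic. The heart of the proof, and the step I expect to be the main obstacle, is the structural claim that the only \emph{non-forest} sets $A$ with $r(A) = g-1$ are the edge sets of the $g$-cycles of $G$. To establish it I would take such an $A$, restrict attention to the components carrying edges, and note that a component containing a cycle is connected on at least $g$ vertices and hence already has rank at least $g-1$; since the total rank is $g-1$, that component must carry every edge of $A$, be connected on exactly $g$ vertices, and contain a cycle of length exactly $g$. At this point I would invoke the girth hypothesis to rule out any further edge: an extra edge would be a chord of this $g$-cycle and would create a cycle of length less than $g$, contradicting girth $g$. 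Thus the component is precisely a $g$-cycle, giving $t$ such sets, each with $|A| = g$ and sign $(-1)^g$. Combining the two contributions gives
$$(-1)^{g-1} a_{g-1} = (-1)^{g-1}\binom{s}{g-1} + (-1)^{g} t = (-1)^{g-1}\big(\tbinom{s}{g-1} - t\big),$$
whence $a_{g-1} = \binom{s}{g-1} - t$, as claimed.
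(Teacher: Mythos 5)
Your proof is correct. The paper does not actually prove this proposition---it is quoted from Whitney~\cite{W32} without proof---but your derivation from Theorem~\ref{pro: base} (grouping edge sets by rank $r(A)=n-k_A$, using girth to show every contributing set with $r(A)\le g-2$ is a forest, and identifying the non-forest sets of rank $g-1$ with the $g$-cycles via the monotonicity of rank and the no-chord argument) is the standard argument and is sound; it also matches the way the paper itself uses Theorem~\ref{pro: base} later when computing $a_3$ for $M=K_1\vee G$.
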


\begin{pro} [\cite{KM19}] \label{pro: tree}
Suppose $T$ is a tree and $\mathcal{H} = (L,H)$ is an $m$-fold cover of $T$ such that $E_H(L(u),L(v))$ is a perfect matching whenever $uv \in E(T)$.  Then, $\mathcal{H}$ has a canonical labeling.
\end{pro}

\begin{pro} \label{pro: obvious}
Suppose that $\mathcal{H} = (L,H)$ is an $m$-fold cover of graph $G$ and $\mathcal{H}$ has a canonical labeling.  Let $B_i = \{(v,i): v \in V(G) \}$ for each $i \in [m]$. Then, $I \subset V(H)$ satisfies: $|I \cap L(v)|=1$ for each $v \in V(G)$ and $H[I]$ is isomorphic to $G$ if and only if $I = B_j$ for some $j \in [m]$.
\end{pro}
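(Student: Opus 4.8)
The plan is to first translate the hypotheses (canonical labeling plus the cover axioms) into an exact adjacency rule for the cross-edges of $H$, and then to reduce both implications of the biconditional to a short edge-count on the induced subgraph $H[I]$ together with the connectivity of $G$ (the standing assumption in this note). The key preliminary claim I would establish is: for distinct $u,v \in V(G)$ and any $i,j \in [m]$, we have $(u,i)(v,j) \in E(H)$ if and only if $uv \in E(G)$ and $i=j$. For non-edges this is immediate from requirement (3) of the cover definition, which forces $E_H(L(u),L(v))$ to be empty when $u \neq v$ and $uv \notin E(G)$, so no such cross-edge can occur. For an edge $uv \in E(G)$, the canonical labeling supplies the $m$ cross-edges $(u,j)(v,j)$ with $j \in [m]$, all lying in $E(H)$; since requirement (4) forces $E_H(L(u),L(v))$ to be a matching and a matching between two $m$-element sets has at most $m$ edges, these $m$ edges already exhaust the cross-edges between $L(u)$ and $L(v)$. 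Hence a cross-edge joins $(u,i)$ to $(v,j)$ precisely when $i=j$.

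Granting this adjacency rule, the reverse implication is immediate. If $I = B_j$, then $I \cap L(v) = \{(v,j)\}$ has size $1$ for each $v$, and the map $v \mapsto (v,j)$ is a bijection from $V(G)$ onto $B_j$ that, by the adjacency rule, sends $uv \in E(G)$ to the edge $(u,j)(v,j) \in E(H)$ and sends non-edges to non-edges; thus it is an isomorphism and $H[B_j] \cong G$.

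For the forward implication, I would write $I = \{(v,c(v)) : v \in V(G)\}$ for the function $c : V(G) \to [m]$ determined by the condition $|I \cap L(v)| = 1$. The adjacency rule shows that $H[I]$ contains an edge between $(u,c(u))$ and $(v,c(v))$ exactly when $uv \in E(G)$ and $c(u) = c(v)$, so $H[I]$ has at most $|E(G)|$ edges, with equality if and only if $c$ is constant across every edge of $G$. Since $|I| = |V(G)|$ and $H[I] \cong G$, the subgraph $H[I]$ must have exactly $|E(G)|$ edges, forcing $c(u) = c(v)$ for every $uv \in E(G)$. Connectivity of $G$ then makes $c$ globally constant, say $c \equiv j$, which gives $I = B_j$.

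I expect the adjacency rule to be the only real content: the crux is the observation that the $m$ canonically labeled cross-edges $(u,j)(v,j)$ already fill out the entire matching $E_H(L(u),L(v))$, so that no ``misaligned'' cross-edges exist. Once this is secured, both directions collapse into a bijection check and an edge-count argument, respectively, and the connectivity hypothesis does the remaining work of upgrading ``constant on each edge'' to ``constant.''
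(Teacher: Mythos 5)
Your proof is correct and follows essentially the same route as the paper's: the paper's one-line argument ("$H[I]$ has fewer edges than $G$") is exactly your edge-count step, with the adjacency rule and the appeal to connectivity left implicit. Your write-up simply makes explicit the two facts the paper takes for granted, namely that the canonical cross-edges exhaust each matching $E_H(L(u),L(v))$ and that connectivity upgrades ``constant on each edge'' to ``globally constant.''
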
  

\begin{proof}
For each $i \in [m]$, it is clear that $|B_i \cap L(v)|=1$ for each $v \in V(G)$ and $H[B_i]$ is isomorphic to $G$.  Conversely, suppose that and $I \notin \{B_1, \ldots, B_m \}$.   Since  $|I \cap L(v)|=1$ for each $v \in V(G)$, $H[I]$ has fewer edges than $G$ contradicting the fact that $H[I]$ is isomorphic to $G$.
\end{proof}

\subsection{Proof of Theorem~\ref{thm: general}} \label{general}

We will now introduce some notation that will be used for the remainder of this note.  Suppose that $G$ is a graph on $n \geq 3$ vertices with $|E(G)| \geq 3$.  Let $s = |E(G)|$, and $E(G) = \{e_1, \ldots, e_s \}$.  Also, for some $m \in \N$ suppose that $\mathcal{H}= (L,H)$ is an $m$-fold cover of $G$ satisfying $|E_H(L(u), L(v))| = m$ whenever $uv \in E(G)$.  

Let $\mathcal{U} = \{ I \subseteq V(H) : |L(v) \cap I| = 1 \text{ for each } v \in V(G) \}$.  Clearly, $|\mathcal{U}| = m^n$.  Now, for each $i \in [s]$, suppose $e_i=u_i v_i$, and let $S_i$ be the set consisting of each $I \in \mathcal{U}$ with the property that $H[I]$ contains an edge in $E_H(L(u_i), L(v_i))$.  Also, for each $i \in [s]$ let $C_i = \mathcal{U} - S_i$.  Clearly,
$$P_{DP}(G, \mathcal{H}) = \left | \bigcap_{i=1}^s C_i \right |.$$
So, by the Inclusion-Exclusion Principle, we see that
$$P_{DP}(G, \mathcal{H}) = |\mathcal{U}| - \left | \bigcup_{i=1}^s S_i \right | =  m^n - \sum_{k=1}^s (-1)^{k-1} \left ( \sum_{1 \leq i_1 < \cdots < i_k \leq s} \left | \bigcap_{j=1}^k S_{i_j} \right| \right).$$  
The following Lemma is the key to our proof of Theorem~\ref{thm: general}.

\begin{lem} \label{lem: formulas2}
Assuming the set up established above, suppose that $G$ is a graph of girth $g \in \N$.  Then, the following three statements hold. \\
(i) For any $k \in [g-1]$ and $i_1, \ldots, i_k \in [s]$ satisfying $i_1 < \cdots < i_k$, $\left | \bigcap_{j=1}^k S_{i_j} \right|  = m^{n-k}$. \\  
(ii) If $e_{i_1}, \ldots, e_{i_g}$ are distinct edges in $G$, then $\left | \bigcap_{j=1}^g S_{i_j} \right| \leq m^{n-g+1}$.  Moreover, $\left | \bigcap_{j=1}^g S_{i_j} \right| = m^{n-g}$ when $e_{i_1}, \ldots, e_{i_g}$ are not the edges of a $g$-cycle in $G$. \\
(iii) For any $k \geq g+1$ and $i_1, \ldots, i_k \in [s]$ satisfying $i_1 < \cdots < i_k$, $\left | \bigcap_{j=1}^k S_{i_j} \right|  \leq m^{n-g}$.  
\end{lem}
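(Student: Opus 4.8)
The plan is to analyze the quantity $\left|\bigcap_{j=1}^k S_{i_j}\right|$ by understanding what it counts combinatorially. Recall that $S_i$ consists of those $I \in \mathcal{U}$ (i.e., transversals picking exactly one vertex from each $L(v)$) for which $H[I]$ contains the cross-edge of $E_H(L(u_i),L(v_i))$ induced by $I$. So $I \in \bigcap_{j=1}^k S_{i_j}$ means that, for each edge $e_{i_j}=u_{i_j}v_{i_j}$, the two chosen vertices in $L(u_{i_j})$ and $L(v_{i_j})$ are matched by the cross-edge matching. Let $F = \{e_{i_1},\ldots,e_{i_k}\}$ and let $G_F$ be the spanning subgraph of $G$ with edge set $F$. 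The key observation is that the constraint imposed on a transversal by $F$ decouples across connected components of $G_F$: on each component, once we fix the chosen vertex at any single vertex of that component, the matchings propagate a forced choice along each edge, but because the matchings need not be perfect, a given choice at one endpoint may or may not extend. I would formalize this by counting, for each component, the number of ways to choose a consistent transversal.

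The central idea is that for an acyclic (tree or forest) configuration of edges, the propagation is always consistent and each edge reduces the dimension by exactly one. First I would prove part (i): if $k \le g-1$, then since $G$ has girth $g$, any $k$ edges form a forest. For a forest with $k$ edges spanning some set of vertices, the number of components of $G_F$ (on the full vertex set $V(G)$) is $n-k$. On each tree component, fixing the chosen vertex at one root determines, via the matchings, a forced candidate along each edge; since $|E_H(L(u),L(v))|=m$ means each matching is a perfect matching (every vertex in $L(u)$ is matched to a unique vertex in $L(v)$), the choice at the root freely propagates to a unique consistent choice throughout the tree. Hence each of the $n-k$ components contributes a factor of $m$ (free choice of representative at a root) and isolated vertices contribute $m$ as well, giving exactly $m^{n-k}$. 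This is the cleanest case and I expect it to follow directly from the perfect-matching structure, essentially Proposition~\ref{pro: tree} applied componentwise.

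For part (ii), the edge set $F = \{e_{i_1},\ldots,e_{i_g}\}$ has $g$ edges. If these edges do not form a $g$-cycle, then since $G$ has girth $g$, they contain no cycle at all, so $G_F$ is again a forest, now with $g$ edges and $n-g$ components, yielding $m^{n-g}$ by the same propagation argument. If the $g$ edges do form a $g$-cycle $C$, then $G_F$ has exactly one cycle: the count becomes $m^{n-g+1}$ times the number of consistent labelings around the single cycle. Here the propagation around the cycle must close up consistently: starting from a chosen vertex and following the $g$ perfect matchings around the cycle yields a permutation of $L(v)$ for the starting vertex $v$, and the number of consistent choices is the number of fixed points of this permutation, which is at most $m$ and could be as small as $0$. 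Thus $\left|\bigcap S_{i_j}\right| \le m^{n-g+1}$, with the inequality (rather than equality) reflecting that the cyclic closure constraint may admit fewer than $m$ solutions. The main subtlety is bookkeeping the component count correctly when the cycle uses $g$ vertices but may share vertices with the rest of the configuration; since $F$ here is exactly the $g$ cycle-edges, $G_F$ is the $g$-cycle plus $n-g$ isolated vertices, so there are $n-g+1$ components and the bound is immediate.

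For part (iii), with $k \ge g+1$ edges, the subgraph $G_F$ must contain at least one cycle (indeed its cycle rank is $k - (n - k_F) \ge 1$ where $k_F$ is the number of components). The cleanest approach is a monotonicity argument: adding edges to $F$ can only impose more matching constraints, so $\bigcap_{j} S_{i_j}$ is a subset of the intersection over any subcollection of the edges. I would pick a subcollection of the $k$ edges that contains a $g$-cycle together with enough additional forest edges, or more simply argue that $G_F$ contains a spanning structure with at least one independent cycle, so the number of consistent transversals is at most $m^{(\text{number of components})} \cdot m^{(\text{number of free cycles})-\text{something}}$; the honest bound is $\left|\bigcap S_{i_j}\right| \le m^{n-k+c_F}$ where $c_F$ is the number of independent cycles, but I want the cleaner bound $m^{n-g}$. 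I expect this to follow by noting that any cycle in $G_F$ has length at least $g$, so the number of vertices touched is large relative to the number of free propagation roots; concretely, if $G_F$ has $k_F$ components and contains at least one cycle, then $k \ge n - k_F + 1$, and each component carries at most $m$ consistent choices (cycles only reduce this), giving at most $m^{k_F}$ total, while $k_F \le n - k + (\text{cycle rank})$. The hard part will be extracting the uniform bound $m^{n-g}$ from this: the cleanest route is to observe that since every cycle has at least $g$ edges, using $k \ge g+1$ edges forces the number of ``effectively free'' coordinates to drop below $n-g$. I would reduce (iii) to (ii) by a monotonicity/subset argument, selecting $g$ of the $k$ edges that include a shortest cycle (necessarily of length $\ge g$) so that the intersection over all $k$ edges is contained in an intersection already bounded by $m^{n-g}$ via part (ii) applied to a $g$-cycle, while the extra $k-g$ edges can only further restrict, keeping the count at most $m^{n-g}$.

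Overall, the unifying principle is that $\left|\bigcap_{j=1}^k S_{i_j}\right|$ equals a product over components of $G_F$, where tree components contribute exactly $m$ and each independent cycle multiplies by a fixed-point count of at most $m$ (for $g$-cycles) and exactly forces consistency to collapse a dimension for longer forced cycles. The main obstacle I anticipate is part (iii): turning the component-and-cycle-rank count into the single clean bound $m^{n-g}$ uniformly over all $k \ge g+1$ and all edge subsets. I expect the monotonicity reduction to part (ii) to be the decisive step there, and I would make sure the component-counting in the cyclic case of (ii) is airtight since it underpins both (ii) and (iii).
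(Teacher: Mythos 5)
Your treatment of parts (i) and (ii) matches the paper's: for (i) the edge set is a forest by the girth hypothesis, the perfect matchings (which follow from $|E_H(L(u),L(v))|=m$) let you apply Proposition~\ref{pro: tree} and Proposition~\ref{pro: obvious} componentwise, and each of the $n-k$ components contributes a factor of exactly $m$; for (ii) the non-$g$-cycle case is again a forest count, and your fixed-point-of-a-permutation bound for the $g$-cycle case is a valid (slightly more explicit) substitute for the paper's one-line monotonicity bound $\left|\bigcap_{j=1}^{g}S_{i_j}\right|\leq\left|\bigcap_{j=1}^{g-1}S_{i_j}\right|=m^{n-g+1}$.

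Part (iii) is where you have a genuine gap. You propose to reduce to (ii) by ``selecting $g$ of the $k$ edges that include a shortest cycle'' and then invoke ``part (ii) applied to a $g$-cycle'' to get the bound $m^{n-g}$. But part (ii) applied to a $g$-cycle only gives $m^{n-g+1}$; the bound $m^{n-g}$ in (ii) holds precisely when the chosen $g$ edges are \emph{not} the edges of a $g$-cycle. So in the problematic case --- when your $k\geq g+1$ edges do contain a $g$-cycle --- your chosen subcollection yields the wrong exponent and the argument does not close. The paper's resolution is the opposite selection: among any $k\geq g+1$ edges one can always choose $g$ of them that are \emph{not} the edge set of a $g$-cycle (e.g.\ drop one cycle edge and replace it with one of the $k-g\geq 1$ remaining edges; two distinct $g$-cycles cannot share $g-1$ edges in a simple graph), and then monotonicity gives $\left|\bigcap_{j=1}^{k}S_{i_j}\right|\leq\left|\bigcap_{j=1}^{g}S_{i_j}\right|=m^{n-g}$ by the equality clause of (ii). Your alternative sketch via components and cycle rank could also be pushed through, but as written you neither justify that every component (cyclic or not) contributes at most $m$ nor that the number of components is at most $n-g$, so the subset-selection fix is the step you are missing.
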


\begin{proof}
For Statement (i), suppose that $G'$ is the spanning subgraph of $G$ with $E(G')= \{e_{i_1}, \ldots, e_{i_k} \}$.  Since $G$ has girth $g$ and $k \in [g-1]$, $G'$ is an acyclic graph with $n-k$ components.  Suppose the components of $G'$ are $W_1, \ldots, W_{n-k}$ (each component is a tree).  Note that we can construct each element $I$ of $\bigcap_{j=1}^k S_{i_j}$ in $(n-k)$ steps as follows.  For each $i \in [n-k]$ consider the component $W_i$.  Suppose $V(W_i) = \{w_1, \ldots, w_l\}$.  Choose one element from each of $L(w_1), \ldots, L(w_l)$ so that the subgraph of $H$ induced by the set containing these chosen elements is isomorphic to $W_i$.  Then, place these chosen elements in $I$.   By Propositions~\ref{pro: tree} and~\ref{pro: obvious}, this step can be done in $m$ ways (regardless of the choices made in previous steps).  So, $\left | \bigcap_{j=1}^k S_{i_j} \right|  = m^{n-k}$.

For Statement (ii), the first part follows from Statement~(i) since $\left | \bigcap_{j=1}^g S_{i_j} \right| \leq \left | \bigcap_{j=1}^{g-1} S_{i_j} \right| = m^{n-g+1}$.  So, suppose that $e_{i_1}, \ldots, e_{i_g}$ are not the edges of a $g$-cycle in $G$.  Let $G''$ be the spanning subgraph of $G$ with $E(G'')= \{e_{i_1}, \ldots, e_{i_g}\}$.  Clearly, $G''$ is an acyclic graph with $n-g$ components. We can obtain $\left | \bigcap_{j=1}^g S_{i_j} \right| = m^{n-g}$ by using an argument similar to the argument used for the proof of Statement~(i). 

For Statement (iii), notice that we can assume without loss of generality that $e_{i_1}, \ldots, e_{i_g}$ are not the edges of a $g$-cycle in $G$.  So, by Statement~(ii), we see that $\left| \bigcap_{j=1}^k S_{i_j} \right| \leq \left | \bigcap_{j=1}^g S_{i_j} \right| = m^{n-g}.$ 
\end{proof}

We are now ready to prove Theorem~\ref{thm: general}.

\begin{proof}
Suppose $s=|E(G)|$ and $t$ is the number of $g$-cycles in $G$ (note that $t=0$ in the case that $G$ has girth $g+1$).  Since $g$ is odd, Propositions~\ref{pro: coefficients2} and~\ref{pro: coefficients} tell us that there is an $N \in \N$ such that 
$$P(G,m) \leq \left(\binom{s}{g-1} - t \right) m^{n-g+1} + \sum_{i=0}^{g-2} (-1)^i \binom{s}{i} m^{n-i}$$
whenever $m \geq N$.  Suppose that $m$ is a fixed natural number satisfying $m \geq N$.

Suppose that $\mathcal{H}= (L,H)$ is an $m$-fold cover of $G$ satisfying $P_{DP}(G, \mathcal{H}) = P_{DP}(G,m)$.  Clearly, we may assume that $|E_H(L(u), L(v))| = m$ whenever $uv \in E(G)$.  Now, assume we use the same notation described at the start of this Subsection.  By Statement~(i) of Lemma~\ref{lem: formulas2}, we have that
\begin{align*}
&P_{DP}(G,m) \\
&=  m^n + \sum_{k=1}^s (-1)^{k} \left ( \sum_{1 \leq i_1 < \cdots < i_k \leq s} \left | \bigcap_{j=1}^k S_{i_j} \right| \right) \\
&= \sum_{i=0}^{g-1} (-1)^i \binom{s}{i} m^{n-i} - \sum_{1 \leq i_1 < \cdots < i_g \leq s} \left | \bigcap_{j=1}^g S_{i_j} \right| + \sum_{k=g+1}^s (-1)^{k} \left ( \sum_{1 \leq i_1 < \cdots < i_k \leq s} \left | \bigcap_{j=1}^k S_{i_j} \right| \right).  
\end{align*}
We see that Statement~(ii) of Lemma~\ref{lem: formulas2} implies that
$$\sum_{1 \leq i_1 < \cdots < i_g \leq s} \left | \bigcap_{j=1}^g S_{i_j} \right| \leq tm^{n-g+1} + \left(\binom{s}{g} - t \right) m^{n-g}.$$
Furthermore, Statement~(iii) of Lemma~\ref{lem: formulas2} implies that
$$\sum_{k=g+1}^s (-1)^{k} \left ( \sum_{1 \leq i_1 < \cdots < i_k \leq s} \left | \bigcap_{j=1}^k S_{i_j} \right| \right) \geq -2^s m^{n-g}.$$
These facts imply that
$$P_{DP}(G,m) \geq  \left(\binom{s}{g-1} - t \right) m^{n-g+1} + \sum_{i=0}^{g-2} (-1)^i \binom{s}{i} m^{n-i} - \left(\binom{s}{g} - t \right) m^{n-g} - 2^s m^{n-g}.$$
So, we see that
$$P(G,m) - P_{DP}(G,m) \leq \left(\binom{s}{g} - t + 2^s \right) m^{n-g}.$$
The desired result immediately follows. 
\end{proof}

\subsection{Proof of Theorem~\ref{thm: cone}}

Notice that the result of Theorem~\ref{thm: cone} is obvious when $G$ is acyclic since the cone of such a graph is chordal.  So, throughout this Subsection suppose that $G$ is a graph with $n-1$ vertices where $n \geq 4$ and $s \geq 3$ edges.  Suppose that $E(G) = \{e_1, \ldots, e_s \}$.  Also, suppose that $M = K_1 \vee G$, and $w$ is the vertex corresponding to the copy of $K_1$ used to form $M$.  We use $e_{s+1}, \ldots, e_{s+n}$ to denote the edges in $E(M)$ that have $w$ as an endpoint.  We want to show that $P_{DP}(M,m) = P(M,m)$ for sufficiently large $m$, or equivalently, $P_{DP}(M,m) \geq P(M,m)$ for sufficiently large $m$.  

Since $M$ is a graph with $n$ vertices and $n+s$ edges, Propositions~\ref{pro: coefficients2} and~\ref{pro: coefficients} tell us that
$$P(M,m)= m^n - (n+s)m^{n-1} +  \left(\binom{n+s}{2} - t \right)m^{n-2} - a_3 m^{n-3} + O(m^{n-4}) \text{ as } m \rightarrow \infty$$
where $t$ is the number of 3-cycles contained in $M$ (note that $t \geq s$).  We now give a formula for $a_3$.  Let $A = \{A_1, \ldots, A_q \}$ be the set of spanning subgraphs of $M$ with $(n-3)$ components.  For each $i \in [q]$, it is straightforward to verify that $3 \leq |E(A_i)| \leq 6$.  For $i \in \{3, 4, 5, 6 \}$, let $P_i = \{E(A_j) : A_j \in A, |E(A_j)|=i \}$.  By Theorem~\ref{pro: base}, $a_3 = |P_3|-|P_4|+|P_5|-|P_6|$.

In this Subsection we are interested in finding a lower bound for $P_{DP}(M,m)$.  So, whenever $\mathcal{H} = (L,H)$ is an $m$-fold cover for $M$, we will assume that $|E_H(L(u),L(v))|=m$ for each $uv \in E(M)$.  We will also suppose without loss of generality that $L(u) = \{(u,j): j \in [m] \}$ for each $u \in V(M)$, and $(w,j)(v,j) \in E(H)$ for each $v \in V(G)$ and $j \in [m]$ (this is permissible by Proposition~\ref{pro: tree} since the spanning subgraph of $M$ with edge set $\{e_{s+1}, \ldots, e_{s+n}\}$ is a tree).  Also, for each $e_i \in E(G)$ we suppose that $e_i = u_iv_i$, and we let $x_{i, \mathcal{H}}$~\footnote{We will just write $x_i$ when $\mathcal{H}$ is clear from context.} be the number of edges in $E_H(L(u_i),L(v_i))$ that connect endpoints with differing second coordinates.  Finally, we let $x_\mathcal{H} = \sum_{i=1}^s x_{i, \mathcal{H}}$.  Clearly, if $x_{\mathcal{H}}=0$, then $\mathcal{H}$ has a canonical labeling and $P_{DP}(M, \mathcal{H})=P(M,m)$.  Also, $x_\mathcal{H}$ is the number of cross edges in $H$ that connect vertices with differing second coordinates.

For the next Lemma assume that $\mathcal{H} = (L,H)$ is an $m$-fold cover for $M$, and assume we are using the same notation as the beginning of Subsection~\ref{general} (with $M$ playing the role of $G$).

\begin{lem} \label{lem: three}
The following statments hold. \\
(i)  $\sum_{1 \leq i_1 < \cdots < i_3 \leq n+s} \left | \bigcap_{j=1}^3 S_{i_j} \right| \leq tm^{n-2} - x_\mathcal{H}m^{n-3} + |P_3|m^{n-3}$, \\
(ii) $\sum_{1 \leq i_1 < \cdots < i_4 \leq n+s} \left | \bigcap_{j=1}^4 S_{i_j} \right| \geq |P_4|m^{n-3} - 2|P_4|x_\mathcal{H}m^{n-4}$, \\ 
(iii) $\sum_{1 \leq i_1 < \cdots < i_5 \leq n+s} \left | \bigcap_{j=1}^5 S_{i_j} \right| \leq |P_5|m^{n-3} + \left(\binom{n+s}{5} - |P_5| \right) m^{n-4}$, \\
(iv) $\sum_{1 \leq i_1 < \cdots < i_6 \leq n+s} \left | \bigcap_{j=1}^6 S_{i_j} \right| \geq |P_6|m^{n-3}- 2|P_6|x_\mathcal{H}m^{n-4}$, and \\
(v) For $k \geq 7$, $\sum_{1 \leq i_1 < \cdots < i_k \leq n+s} \left | \bigcap_{j=1}^k S_{i_j} \right| \leq \binom{n+s}{k} m^{n-4}$.  
\end{lem}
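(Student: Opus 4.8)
The plan is to reduce every quantity $\left|\bigcap_{j=1}^k S_{i_j}\right|$ to a product over the components of the spanning subgraph $F$ of $M$ with edge set $\{e_{i_1},\dots,e_{i_k}\}$. An element $I$ lies in $\bigcap_{j=1}^k S_{i_j}$ exactly when, for each chosen edge $e=xy$, the two vertices $I\cap L(x)$ and $I\cap L(y)$ are joined by the matching $E_H(L(x),L(y))$; since edges in different components of $F$ constrain disjoint parts of $I$, these choices are independent, giving $\left|\bigcap_{j=1}^k S_{i_j}\right|=\prod_r N(W_r)$, where $W_1,\dots,W_c$ are the components of $F$ (isolated vertices included) and $N(W)$ is the number of ways to pick one vertex of $H$ from each $L(v)$, $v\in V(W)$, realizing every matching edge of $W$. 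First I would record three facts about $N(W)$: (a) if $W$ is a tree or an isolated vertex then $N(W)=m$, immediate from Propositions~\ref{pro: tree} and~\ref{pro: obvious}; (b) for any $W$ one has $N(W)\le m$, since satisfying a fixed spanning tree of $W$ already forces $I$ to be one of the $m$ canonically labeled transversals on $V(W)$; and (c) for any $W$ one has $N(W)\ge m-x_{\mathcal{H}}$, obtained by restricting to the $m$ ``fully canonical'' transversals that assign second coordinate $a\in[m]$ to every vertex of $W$. Such a transversal realizes every cone edge automatically (these were arranged to be coordinate preserving) and fails only on those $G$-edges $e_i\in E(W)$ whose matching is not coordinate preserving at $a$; across all $a$ at most $\sum_{e_i\in E(W)}x_{i}\le x_{\mathcal{H}}$ choices are lost. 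Combining (a) and (b) yields the uniform bound $\left|\bigcap_{j=1}^k S_{i_j}\right|\le m^{c}$.

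With these tools the two upper-bound statements (iii) and (v) are the easiest. I would classify a $k$-edge subset by its number of components $c=\beta+n-k$, where $\beta$ is its cyclomatic number. For $k=5$ one checks $\beta\in\{0,1,2\}$, so $c\in\{n-5,n-4,n-3\}$; the subsets with $c=n-3$ are precisely those in $P_5$, each contributing at most $m^{n-3}$, while every other $5$-subset has $c\le n-4$ and contributes at most $m^{n-4}$, and summing proves (iii). For $k\ge7$ I would check that any $k$ edges of a simple graph have a spanning forest with at least $4$ edges (at most $3$ forest edges permit at most $\binom{4}{2}=6$ edges in total), so $c\le n-4$, each term is at most $m^{n-4}$, and summing over all $\binom{n+s}{k}$ subsets gives (v).

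For the lower-bound statements (i), (ii), (iv) the point is to track the linear-in-$x_{\mathcal{H}}$ correction. A $3$-edge subset is either a forest, contributing exactly $m^{n-3}$ by (a), or a triangle of $M$. A triangle through $w$ corresponds to a $G$-edge $e_i=u_iv_i$, and because its two cone edges are coordinate preserving the fully canonical transversals are the \emph{only} valid ones, so here $N=m-x_{i}$ exactly and the triangle contributes $(m-x_{i})m^{n-3}$; a triangle inside $G$ contributes at most $m\cdot m^{n-3}=m^{n-2}$ by (b). Summing the $s$ cone triangles, the $t-s$ triangles of $G$, and the $|P_3|$ forests gives exactly the bound in (i), the term $-x_{\mathcal{H}}m^{n-3}$ arising from $\sum_{i=1}^{s}x_{i}=x_{\mathcal{H}}$. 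For (ii) and (iv) I would discard all non-$P_4$ (resp.\ non-$P_6$) subsets, which contribute nonnegatively, and note that every subset in $P_4$ (a triangle plus one extra edge) or $P_6$ (a copy of $K_4$) has exactly one cyclic component $W$ together with $n-4$ tree or isolated components; hence each such term equals $N(W)m^{n-4}\ge(m-x_{\mathcal{H}})m^{n-4}=m^{n-3}-x_{\mathcal{H}}m^{n-4}$ by (a) and (c). Summing over the $|P_4|$ (resp.\ $|P_6|$) such subsets yields the stated bounds, the factor $2$ being a harmless over-estimate.

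The main obstacle I anticipate is structural bookkeeping rather than any single hard inequality: one must verify that the spanning subgraphs counted by $P_3,P_4,P_5,P_6$ have exactly the component and cyclomatic profiles asserted (forest-or-triangle on $3$ edges; triangle-plus-edge for $P_4$; the two configurations with $\beta=2$ for $P_5$; $K_4$ for $P_6$), so that the generic estimates $N(W)\le m$ and $N(W)\ge m-x_{\mathcal{H}}$ are applied to the single cyclic component while every remaining component contributes a clean factor of $m$. The other delicate point is fact (c): I must confirm that the fully canonical transversals really form a valid subfamily of the colorings of $W$ and that the cone edges never contribute to the lost count, so that only the $G$-edge defects $x_{i}$ enter the correction.
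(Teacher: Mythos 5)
Your proposal is correct and follows essentially the same route as the paper: decompose each intersection over the components of the chosen edge set, use Propositions~\ref{pro: tree} and~\ref{pro: obvious} to get exactly $m$ (respectively at most $m$) choices per acyclic (respectively arbitrary) component, and lower-bound the cyclic component by counting the surviving constant-second-coordinate transversals. Your count for the lower bound is in fact slightly sharper than the paper's ($m-x_{\mathcal{H}}$ versus $m-2x_{\mathcal{H}}$, since the paper charges each bad index to a cross-edge that can be charged twice, while you count the exact $x_i$ defective indices per $G$-edge), and your parenthetical description of $P_4$ omits the $4$-cycle configuration, but neither point affects correctness since your general fact (c) covers all cases.
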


\begin{proof}
For Statement~(i), suppose $x$, $y$, and $z$ are distinct edges in $E(M)$.  Let $M'$ be the spanning subgraph of $M$ with $E(M') = \{x,y,z\}$.  If $x$, $y$, and $z$ form a 3-cycle in $M$ containing $w$, then $M'$ consists of this 3-cycle and $n-3$ isolated vertices.  Notice that each 3-cycle in $M$ containing $w$ contains exactly one edge in $E(G)$.  So, we suppose that $z = e_i$ for some $i \in [s]$, then it is clear that $|S_x \cap S_y \cap S_z| = m^{n-3}(m-x_i) = m^{n-2} - x_im^{n-3}$.  In the case that $x$, $y$, and $z$ form a 3-cycle in $M$ not containing $w$, then Lemma~\ref{lem: formulas2} implies that $|S_x \cap S_y \cap S_z| \leq m^{n-2}$.  Finally, in the case that $x$, $y$, and $z$ do not form a 3-cycle in $M$ (note that there are $|P_3|$ such sets of three edges), Lemma~\ref{lem: formulas2} implies $|S_x \cap S_y \cap S_z| = m^{n-3}$.  Statement~(i) now follows immediately from these facts. 

For Statement~(ii), suppose $a,x,y$, and $z$ are distinct edges in $E(M)$.  Let $M'$ be the spanning subgraph of $M$ with $E(M') = \{a,x,y,z\}$.  If $M'$ contains a cycle, then $M'$ contains one cycle and consists of $n-3$ components (note that there are $|P_4|$ sets of four edges for which this happens).  Suppose that the components of $M'$ are $W_1, \ldots, W_{n-3}$, and assume that $W_1$ is the component of $M'$ containing the cycle.  Also, suppose that $V(W_1) = \{w_1, \ldots, w_l \}$.  Now, let $B_i = \{(w_j,i) : j \in [l] \}$ for each $i \in [m]$.  If $H[B_i]$ is not isomorphic to $W_1$, then one of the elements in $B_i$ must be the endpoint of a cross edge in $H$ that connects vertices with differing second coordinates. Let $\mathcal{B}$ consist of each $B_i \in \{B_1, \ldots, B_m\}$ with the property that $H[B_i]$ is not isomorphic to $W_1$.  Notice this means that for each $B_j \in \{B_1, \ldots, B_m\}-\mathcal{B}$, $H[B_j]$ is isomorphic to $W_1$, and there are are least $|\{B_1, \ldots, B_m\}-\mathcal{B}|$ ways to select one element from each of $L(w_1), \ldots, L(w_l)$ so that the subgraph of $H$ induced by the set containing these chosen elements is isomorphic to $W_1$.  Let $\mathcal{E}$ be the set of cross edges in $H$ that connect vertices with differing second coordinates (note that $|\mathcal{E}|=x_{\mathcal{H}}$).  We can construct a function $\eta: \mathcal{B} \rightarrow \mathcal{E}$ that maps each $B_i \in \mathcal{B}$ to one of the edges in $\mathcal{E}$ that has an endpoint in $B_i$.  Furthermore, if $B_i$, $B_j$, and $B_t$ are distinct elements of $\mathcal{B}$, then it is not possible for $\eta(B_i) = \eta(B_j) = \eta(B_t)$ since an edge only has two endpoints.  Consequently, $|\{B_1, \ldots, B_m\}-\mathcal{B}| \geq (m-2x_\mathcal{H})$.  So, $|S_a \cap S_x \cap S_y \cap S_z| \geq m^{n-4}(m-2x_\mathcal{H}) = m^{n-3} - 2x_\mathcal{H}m^{n-4}$.  Statement~(ii) now immediately follows. 

For Statement~(iii), suppose $a, b, x, y$, and $z$ are distinct edges in $E(M)$.  Let $M'$ be the spanning subgraph of $M$ with $E(M') = \{a,b,x,y,z\}$.  Suppose $M'$ consists of $n-3$ components (note that there are $|P_5|$ sets of five edges for which this happens).  Suppose that the components of $M'$ are $W_1, \ldots, W_{n-3}$.  Note that we can construct each element $I$ of $(S_a \cap S_b \cap S_x \cap S_y \cap S_z)$ in $(n-3)$ steps as follows.  For each $i \in [n-3]$ consider the component $W_i$.  If $\{a,b,x,y,z \} \cap E(W_i) \neq \emptyset$, then $V(W_i)$ has at least 2 elements, say $V(W_i) = \{w_1, \ldots, w_l\}$, choose one element from each of $L(w_1), \ldots, L(w_l)$ so that the subgraph of $H$ induced by these chosen elements is isomorphic to $W_i$ (this can be done in at most $m$ ways~\footnote{To see why this is so, consider a spanning tree of $W_i$ and apply Propositions~\ref{pro: tree} and~\ref{pro: obvious}.}).  Then, place these chosen elements in $I$.  If $\{a,b,x,y,z \} \cap E(W_i) = \emptyset$, then $W_i$ is a single vertex, say $V(W_i) = \{w\}$, and we choose an element of $L(w)$ to place in $I$.  Notice that in either case there are at most $m$ ways to complete the step.  Consequently, $|S_a \cap S_b \cap S_x \cap S_y \cap S_z| \leq m^{n-3}$.  A similar argument shows that when $M'$ has fewer than $n-3$ components, $|S_a \cap S_b \cap S_x \cap S_y \cap S_z| \leq m^{n-4}$.  Statement~(iii) now follows from the fact that $\sum_{1 \leq i_1 < \cdots < i_5 \leq n+s} \left | \bigcap_{j=1}^5 S_{i_j} \right|$ has $\binom{n+s}{5}$ terms.   

For Statement~(iv), suppose $a,b,c,x,y$, and $z$ are distinct edges in $E(M)$.  Let $M'$ be the spanning subgraph of $M$ with $E(M') = \{a,b,c,x,y,z\}$.  Suppose $M'$ consists of $n-3$ components (note that there are $|P_6|$ sets of six edges for which this happens).  It is easy to see that $M'$ must consist of a complete graph on four vertices and $n-4$ isolated vertices.  Suppose that the components of $M'$ are $W_1, \ldots, W_{n-3}$, and assume that $W_1 = K_4$.  Using an argument similar to the argument used to prove Statement~(ii), we obtain $|S_a \cap S_b \cap S_c \cap S_x \cap S_y \cap S_z| \geq m^{n-4}(m-2x_\mathcal{H}) = m^{n-3} - 2x_\mathcal{H}m^{n-4}$.  Statement~(iv) now immediately follows. 

For Statement~(v), suppose $k \geq 7$ and $1 \leq i_1 < \cdots < i_k \leq n+s$.  Let $M'$ be the spanning subgraph of $M$ with $E(M') = \{e_{i_1}, \ldots, e_{i_{k}}\}$.  Then, $M'$ must consist of at most $n-4$ components.  An argument similar to the argument used to prove Statement~(iii) then yields $\left | \bigcap_{j=1}^k S_{i_j} \right| \leq m^{n-4}$.  Statement~(v) now immediately follows.   
\end{proof}

We need one more Lemma before proving Theorem~\ref{thm: cone}.

\begin{lem} \label{lem: lower}
Suppose that $m \geq 2(|P_4|+|P_6|)$, and $\mathcal{H} = (L,H)$ is an $m$-fold cover for $M$ with $x_\mathcal{H} > 0$.  Then,
\begin{align*}
&P_{DP}(M, \mathcal{H}) \\
&\geq m^n - (n+s)m^{n-1} +  \left(\binom{n+s}{2} - t \right)m^{n-2} - a_3 m^{n-3} \\
&+ m^{n-3} - (2(|P_4|+|P_6|+2^{s-1}))m^{n-4}.
\end{align*}
\end{lem}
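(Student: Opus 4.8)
The plan is to expand $P_{DP}(M,\mathcal{H})$ using the Inclusion--Exclusion identity already recorded at the start of Subsection~\ref{general} (now with $M$ in the role of $G$ and the $n+s$ edges $e_1,\dots,e_{s+n}$), and then to feed in the five estimates of Lemma~\ref{lem: three} one power of $m$ at a time. For $k=1$ each singleton $S_{i}$ has size exactly $m^{n-1}$, and for $k=2$ Statement~(i) of Lemma~\ref{lem: formulas2} (applicable since $M$ has girth $3$) gives exactly $m^{n-2}$, so the first two terms reproduce $-(n+s)m^{n-1}+\binom{n+s}{2}m^{n-2}$. For the higher terms I would respect the alternating sign: because we want a \emph{lower} bound on $P_{DP}$, I apply the upper bounds of Lemma~\ref{lem: three} to the odd sums $T_3,T_5$ and to the odd $T_k$ with $k\ge 7$ (these enter with a minus sign), the lower bounds to the even sums $T_4,T_6$ (these enter with a plus sign), and I simply discard the nonnegative even $T_k$ with $k\ge 8$.

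I would first pin down the top three powers. The $-t$ furnished by Statement~(i) of Lemma~\ref{lem: three} supplies the $-t\,m^{n-2}$ and hence the coefficient $\binom{n+s}{2}-t$. The decisive computation is at order $m^{n-3}$: Statements~(i)--(iv) contribute $x_\mathcal{H}-|P_3|$, $+|P_4|$, $-|P_5|$, and $+|P_6|$ respectively, and since $a_3=|P_3|-|P_4|+|P_5|-|P_6|$ by Theorem~\ref{pro: base}, these collapse to the single coefficient $x_\mathcal{H}-a_3$. This is precisely the point at which the surplus over $P(M,m)$ is created.

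The step I expect to be the real obstacle is that Statements~(ii) and~(iv), which produce the clean main terms $|P_4|m^{n-3}$ and $|P_6|m^{n-3}$, also drag along the errors $-2|P_4|x_\mathcal{H}m^{n-4}$ and $-2|P_6|x_\mathcal{H}m^{n-4}$. Since $x_\mathcal{H}$ can be as large as $\Theta(m)$, these ``$m^{n-4}$'' errors are a priori of order $m^{n-3}$ and could annihilate the surplus entirely. The remedy is to keep all of the $x_\mathcal{H}$-dependent terms together and factor
\[
x_\mathcal{H}m^{n-3}-2\bigl(|P_4|+|P_6|\bigr)x_\mathcal{H}m^{n-4}=x_\mathcal{H}m^{n-4}\bigl(m-2(|P_4|+|P_6|)\bigr).
\]
The hypothesis $m\ge 2(|P_4|+|P_6|)$ makes the parenthesized factor nonnegative, whereupon monotonicity in $x_\mathcal{H}$ together with $x_\mathcal{H}\ge 1$ (this is exactly where $x_\mathcal{H}>0$ is used) yields the lower bound $m^{n-3}-2(|P_4|+|P_6|)m^{n-4}$. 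Thus the dangerous $x_\mathcal{H}$-errors are traded for the harmless constant $-2(|P_4|+|P_6|)m^{n-4}$, while the full $+m^{n-3}$ gain survives.

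What remains is to bound the $x_\mathcal{H}$-free error at order $m^{n-4}$, i.e.\ the pieces coming from Statement~(iii) for $k=5$ and from Statement~(v) for the odd $k\ge 7$. This is the most delicate bookkeeping, since the crude term-by-term bounds are far from tight; here I would exploit that every edge at $w$ is canonically labeled, so any subgraph of $M$ contributing a nonzero deficiency at this order is pinned down by which of the $s$ edges of $G$ it uses, of which there are at most $2^s=2\cdot 2^{s-1}$ choices, each contributing at most $m^{n-4}$. Adding this $2^{s}m^{n-4}$ to the $2(|P_4|+|P_6|)m^{n-4}$ produced above yields the stated coefficient $2(|P_4|+|P_6|+2^{s-1})$. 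Collecting all powers of $m$ then gives exactly the inequality in the statement.
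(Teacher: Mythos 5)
Your overall route is exactly the paper's: expand $P_{DP}(M,\mathcal{H})$ by inclusion--exclusion, get the $k=1,2$ terms exactly from Lemma~\ref{lem: formulas2}(i), feed in the bounds of Lemma~\ref{lem: three} with the signs dictated by wanting a lower bound, collapse the $m^{n-3}$ coefficients $x_\mathcal{H}-|P_3|+|P_4|-|P_5|+|P_6|$ to $x_\mathcal{H}-a_3$ via Theorem~\ref{pro: base}, and finally write $x_\mathcal{H}m^{n-3}-2(|P_4|+|P_6|)x_\mathcal{H}m^{n-4}=x_\mathcal{H}m^{n-4}\bigl(m-2(|P_4|+|P_6|)\bigr)$ and use $m\ge 2(|P_4|+|P_6|)$ together with $x_\mathcal{H}\ge 1$. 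All of that matches the paper's proof and is sound.

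The one place you deviate is the $x_\mathcal{H}$-free error at order $m^{n-4}$, and your replacement argument does not hold up. The claim that a subgraph contributing a nonzero deficiency ``is pinned down by which of the $s$ edges of $G$ it uses'' is false: with $e_1=u_1v_1$, the edge sets $\{e_1,wu_1\}$ and $\{e_1,wu_1,wv_1\}$ have the same intersection with $E(G)$, yet the second (a triangle through $w$) has $|S_{e_1}\cap S_{wu_1}\cap S_{wv_1}|=m^{n-2}-x_1m^{n-3}\ne m^{n-2}$ whenever $x_1>0$, while the first is a forest and contributes no deficiency. The deficiency of an edge set $A$ depends on all of $A$, not on $A\cap E(G)$, so grouping the $\binom{n+s}{5}-|P_5|$ residual terms of Statement~(iii) and the $\binom{n+s}{k}$ terms for odd $k\ge 7$ by their $G$-part leaves up to $2^{n-1}$ terms per group, with no cancellation established. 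The honest output of this bookkeeping is a coefficient of order $2^{n+s}$, not $2^{s}$. In fairness, the paper has the identical problem at the identical spot: it silently replaces $\bigl(\binom{n+s}{5}-|P_5|\bigr)+\sum_{k=7}^{n+s}\binom{n+s}{k}$ by $2^{s}$, which already fails for $G=C_4$ (where the left side is $61$ and $2^s=16$). The clean fix for both you and the paper is to state the lemma with $2^{n+s}$ (or any explicit constant depending only on $M$) in place of $2\cdot 2^{s-1}$; this costs nothing in the proof of Theorem~\ref{thm: cone}, which only needs \emph{some} fixed coefficient on $m^{n-4}$. As written, though, your sketch of why $2^{s}$ suffices is a genuine gap, not a detail.
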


\begin{proof}
Using the notation established in Subsection~\ref{general} (with $M$ playing the role of $G$) along with Lemma~\ref{lem: formulas2}, we know that 
\begin{align*}
&P_{DP}(M,\mathcal{H}) \\
&=  m^n + \sum_{k=1}^{n+s} (-1)^{k} \left ( \sum_{1 \leq i_1 < \cdots < i_k \leq n+s} \left | \bigcap_{j=1}^k S_{i_j} \right| \right) \\
&= m^n - (n+s)m^{n-1} + \binom{n+s}{2} m^{n-2} - \sum_{1 \leq i_1 < \cdots < i_3 \leq n+s} \left | \bigcap_{j=1}^3 S_{i_j} \right| + \sum_{1 \leq i_1 < \cdots < i_4 \leq n+s} \left | \bigcap_{j=1}^4 S_{i_j} \right| \\
&- \sum_{1 \leq i_1 < \cdots < i_5 \leq n+s} \left | \bigcap_{j=1}^5 S_{i_j} \right| 
+\sum_{1 \leq i_1 < \cdots < i_6 \leq n+s} \left | \bigcap_{j=1}^6 S_{i_j} \right|   + \sum_{k=7}^{n+s} (-1)^{k} \left ( \sum_{1 \leq i_1 < \cdots < i_k \leq n+s} \left | \bigcap_{j=1}^k S_{i_j} \right| \right).  
\end{align*}
Then, Lemma~\ref{lem: three} yields:
\begin{align*}
&P_{DP}(M,\mathcal{H}) \\
&\geq m^n - (n+s)m^{n-1} + \binom{n+s}{2} m^{n-2} - \left(tm^{n-2} - x_\mathcal{H}m^{n-3} + |P_3|m^{n-3} \right)\\
& + |P_4|m^{n-3} - 2|P_4|x_\mathcal{H}m^{n-4}- \left(|P_5|m^{n-3} + \left(\binom{n+s}{5} - |P_5| \right) m^{n-4} \right) \\
&+|P_6|m^{n-3}- 2|P_6|x_\mathcal{H}m^{n-4}   - \sum_{k=7}^{n+s} \binom{n+s}{k} m^{n-4} \\
&= m^n - (n+s)m^{n-1} +  \left(\binom{n+s}{2} - t \right)m^{n-2} - a_3 m^{n-3} \\
&+ x_\mathcal{H}m^{n-3} - 2|P_4|x_\mathcal{H}m^{n-4} - 2|P_6|x_\mathcal{H}m^{n-4} - 2^{s}m^{n-4} \\
&\geq  m^n - (n+s)m^{n-1} +  \left(\binom{n+s}{2} - t \right)m^{n-2} - a_3 m^{n-3} \\
&+ m^{n-3} - (2(|P_4|+|P_6|+2^{s-1}))m^{n-4}.
\end{align*}
\end{proof}

We now prove Theorem~\ref{thm: cone}.

\begin{proof} 
We know that there must be $C, N_1 \in \N$ such that $P(M,m) \leq m^n - (n+s)m^{n-1} +  \left(\binom{n+s}{2} - t \right)m^{n-2} - a_3 m^{n-3} + Cm^{n-4}$ whenever $m \geq N_1$.  Also, when $m \geq 2(|P_4|+|P_6|)$ and $\mathcal{H} = (L,H)$ is an $m$-fold cover for $M$ with $x_\mathcal{H} > 0$,  Lemma~\ref{lem: lower} tells us $P_{DP}(M, \mathcal{H}) \geq m^n - (n+s)m^{n-1} +  \left(\binom{n+s}{2} - t \right)m^{n-2} - a_3 m^{n-3} + m^{n-3} - (2(|P_4|+|P_6|+2^{s-1}))m^{n-4}.$  Finally, there must be an $N_2 \in \N$ such that $m^{n-3} - (2(|P_4|+|P_6|+2^{s-1})+C)m^{n-4} \geq 0$ whenever $m \geq N_2$.  

Let $N = \max \{N_1, N_2, 2(|P_4|+|P_6|) \}$.  If $m \geq N$ and $\mathcal{H} = (L,H)$ is an $m$-fold cover for $M$ with $x_\mathcal{H} > 0$, then $P_{DP}(M, \mathcal{H}) - P(M,m) \geq m^{n-3} - (2(|P_4|+|P_6|+2^{s-1})+C)m^{n-4} \geq 0$.  Since we know that when $\mathcal{H} = (L,H)$ is an $m$-fold cover for $M$ with $x_\mathcal{H} = 0$, $P_{DP}(M, \mathcal{H})=P(M,m)$, we may conclude that $P_{DP}(M,m) = P(M,m)$ whenever $m \geq N$.      
\end{proof}

{\bf Acknowledgment.}  The authors would like to thank Hemanshu Kaul and Alexandr Kostochka for their guidance and encouragement.

\end{document}